	\theoremstyle{nonumberplain}
	\newtheorem{proof}{Proof.}
\newtheorem{definition}{Definition}[section]
\newtheorem{theorem}{Theorem}[section]
\newtheorem{corollary}[theorem]{Corollary}
\newtheorem{lemma}[theorem]{Lemma}
\newtheorem{proposition}[theorem]{Proposition}
\begin{document}

	
\title{Some enumerative properties of a class of Fibonacci-like cubes\thanks{This work was supported by NSFC (Grant No. 11761064).}}

\author{ Xuxu Zhao , Xu Wang and Haiyuan Yao\footnote{Corresponding author.}%
	\\ {\small College of Mathematics and Statistics, Northwest Normal University, Lanzhou, Gansu 730070, PR China} \\
\small e-mail addresses: zxxgxxsd@163.com; xuwangac@hotmail.com; hyyao@nwnu.edu.cn}

\date{}

\maketitle

\begin{abstract}

  A filter lattice is a distributive lattice formed by all filters of a poset in the anti-inclusion order. We study the combinatorial properties of the Hasse diagrams of filter lattices of certain posets, so called Fibonacci-like cubes, in this paper. Several enumerative polynomials, e.g.\ rank generating function, cube polynomials and degree sequence polynomials are obtained. Some of these results relate to Fibonacci sequence and Padovan sequence.

\textbf{Key words:}  finite distributive lattice, matchable  cube, rank generating functions, cube polynomials, maximal cube polynomials, degree sequence polynomials , indegree polynomials.

\end{abstract}

\section{Introduction}


  Fibonacci cubes are introduced by Hsu \cite{aHsu93} as a interconnection topology. Lucas cubes is a class of graphs with close similarity with Fibonacci cubes \cite{aMunarCS2001}. Many enumerative properties \cite{aKlavz2013} such as cube polynomials, maximal cube polynomials and degree sequence polynomials of Fibonacci cubes and Lucas cubes are obtained by Klav\v{z}ar \&\ Mollard \cite{aKlavM12}, Sayg{\i} \&\ E\u{g}ecio\u{g}lu \cite{aSaygE18}, Mollard \cite{aMolla12} and Klav\v{z}ar et al.\  \cite{aKlavzMP11}. Munarini and Zagaglia Salvi \cite{aMunarZ02b} found the undirected Hasse diagrams of filter lattices of fences \cite{bStanl11} are isomorphic to Fibonacci cubes and gave the the rank polynomials of the filter lattices. Using a convex expansion, Wang et al.\ \cite{aWangZY18} considered indegree sequences of Hasse diagrams of finite distributive lattices and gave the relation of indegree sequence polynomials between cube polynomials.

    Let $P= (P,\le)\,(p \neq \emptyset)$ is a partially ordered set(poset for short). The \emph{dual} $P^*$ of $P$ by defining $x\le y$ to hold in $P^*$ if and only if $y\le x$ holds in $P$. Let $x\prec y$ denote $x$ is covered by $y$, if $x< y$ and $x\leq z<y$ implies $z=x$. Let $Q$ be a subset of $P$, then $Q$ has an induced order relation from $P$: given $x,y\in Q$, $x\le y$ in $Q$ if and only if $x\le y$ in $P$. The subset $Q$ of the poset $P$ is called convex if $a,b \in Q$, $c \in P$, and $a \le c \le b$ imply that $c \in Q$. A subset $Y$ is a filter (up-set) of $P$ if $y\in Y$ and $y\leq z$ then $z\in Y$. The set $\mathcal{F}(P)$ of all filters of $P$ forms a distributive lattice reordered by anti-inclusion: $Y'\leq Y$ if and only if $Y'\supseteq Y$, namely the filter lattice $\mathcal{F}(P): =(\mathcal{F}(P), \supseteq)$.

   A filter lattice is a distributive lattice formed by filter of a poset in the anti-inclusion order. We study the combinatorial properties of a class of specitial cubes in this paper. These cubes are induced by the Hasse diagram of filter lattices of certain poset, contain Fibonacci cubes as its sub-cubes and have similar properties with Fibonacci cubes, so called Fibonacci-like cubes. We obtained some structural and enumerative properties of the cubes and the distributive lattices, including rank generating functions, cube polynomials, maximal cube polynomials, degree sequence polynomials and indegree sequence polynomials.

\section{Fibonacci-like cubes}
    The fence or ``zigzag poset" $Z_n$\cite{bStanl11} is an interesting poset, with element $\{x_1,\ldots, x_n\}$ and cover relation $x_{2i-1}< x_{2i}$ and $x_{2i}> x_{2i+1}$, and the underlying graph of the Hasse diagram of $\mathcal{F}(Z_n)$ is a Fibonacci cube $\Gamma_n$.  Now we have a new poset modified from fence.
\begin{definition}
   The $S$-fence, denoted by $\phi_{n}$, is a fence-like poset, with element $\{x_1,\ldots, x_n\}$ and cover relations $x_1>x_2>x_3$, $x_2<x_4$ and $x_4>x_5$, for $i\ge 3$, $x_{2i-1}< x_{2i}$ and $x_{2i}> x_{2i+1}$.
\end{definition}

\begin{figure}[!htp]
\centering
\begin{tikzpicture}[scale=0.8]
		\newcommand\ke{4};
		
		\foreach \i in {1,...,\ke}
		{
			\draw (\i-1,0) -- (\i,1) -- (\i,0);
		}
\draw (0,0) -- (0,1) --(0,-1);
		
		\foreach \i in {0,...,\ke}
		{
			\foreach \j in {0,1}
			{
				\filldraw[fill=white] (\i,\j) circle (1.5pt);
			}
		}
		\filldraw[fill=white] (0,-1) circle (1.5pt);
        \node[left] at (0,1) {$x_1$};
		\node[left] at (0,0) {$x_2$};
		\node[left] at (0,-1) {$x_3$};
		\node[below] at (1,0) {$x_5$};
		\node[above] at (1,1) {$x_4$};
		\node[above] at (2,1) {$x_6$};
		\node[below] at (3,0) {$\cdots$};
		\node[below] at (2,0) {$x_7$};
		\node[above] at (3,1) {$\cdots$};
		\node[below] at (4,0) {$x_{n}$};
        \node at (1.5,-2) {(a)~$n$ is odd};

\end{tikzpicture}\qquad
\begin{tikzpicture}[scale=0.8]
		\newcommand\ke{3};
		
		\foreach \i in {1,...,\ke}
		{
			\draw (\i-1,0) -- (\i,1) -- (\i,0);
		}
		\draw (0,0) -- (0,1)-- (0,-1);
		\draw (\ke,0) -- (\ke+1,1);
		
		\foreach \i in {0,...,\ke}
		{
			\foreach \j in {0,1}
			{
				\filldraw[fill=white] (\i,\j) circle (1.5pt);
			}
		}
		\filldraw[fill=white] (0,-1) circle (1.5pt);
		\filldraw[fill=white] (\ke+1,1) circle (1.5pt);
        \node[left] at (0,1) {$x_1$};
		\node[left] at (0,0) {$x_2$};
		\node[left] at (0,-1) {$x_3$};
		\node[below] at (1,0) {$x_5$};
		\node[above] at (1,1) {$x_4$};
		\node[above] at (2,1) {$x_6$};
		\node[below] at (3,0) {$\cdots$};
		\node[below] at (2,0) {$x_7$};
		\node[above] at (3,1) {$\cdots$};
		\node[above] at (4,1) {$x_{n}$};
        \node at (1.5,-2) {(b)~$n$ is even};
\end{tikzpicture}
\caption{S-fence $\phi_n$}
\end{figure}
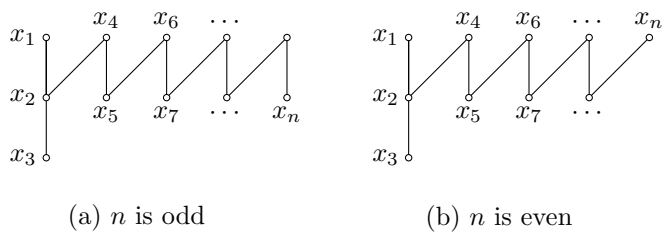

  The Hasse diagram of the filter lattice $\mathcal{F}(\phi_{n})$ can be considered as a directed graph (namely (x,y) is an arc if and only if $y\prec x$). The underlying graph of the Hasse diagram of $\mathcal{F}(\phi_n)$ is a Fibonacci-like cube (FLC for shorter). For convenience, both the Hasse diagram (as a diricted graph) and its underlying graph are denoted by $\Phi_n$ too. Note that $\Phi_0 $ is the trivial graph with only one vertex.

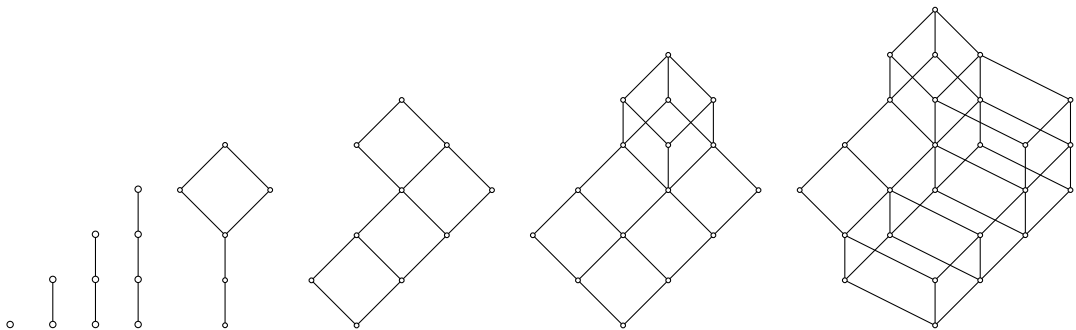
\begin{figure}[!htp]
\centering
\begin{tikzpicture}[scale=0.6]
	\draw (0,0) circle (2pt);
\end{tikzpicture}
\quad
\begin{tikzpicture}[scale=0.6]
	\draw (0,0) -- (0,1);
	\filldraw[fill=white] (0,0) circle (2pt) (0,1) circle (2pt);
\end{tikzpicture}
\quad
\begin{tikzpicture}[scale=0.6]
	\draw (0,0) -- (0,2);
	\filldraw[fill=white] (0,0) circle (2pt) (0,1) circle (2pt) (0,2) circle (2pt);
\end{tikzpicture}
\quad
\begin{tikzpicture}[scale=0.6]
	\draw (0,0) -- (0,3);
	\filldraw[fill=white] (0,0) circle (2pt) (0,1) circle (2pt) (0,2) circle (2pt) (0,3) circle (2pt);
\end{tikzpicture}
\quad
\begin{tikzpicture}[scale=0.6]
	\draw (0,0) -- (1,1) -- (0,2) -- (-1,1) -- cycle -- (0,-2);
	\foreach \i in {0,1}
	{
		\foreach \j in {0,1}
		{
			\filldraw[fill=white] (\i-\j,\i+\j) circle (1.5pt);
		}
		\filldraw[fill=white] (0,\i-2) circle (1.5pt);
	}
\end{tikzpicture}
\quad
\begin{tikzpicture}[scale=0.6]
	\foreach \i in {2,3}
	{
		\draw (\i,\i) -- (\i-2,\i+2);
	}
	\foreach \j in {0,1}
	{
		\draw (\j,\j) -- (\j-1,\j+1) (0-\j,0+\j) -- (3-\j,3+\j);
	}
	\draw (0,4) -- (1,5);
	\foreach \j in {0,1}
	{
		\foreach \i in {0,1,2,3}
		{
			\filldraw[fill=white] (\i-\j,\i+\j) circle (1.5pt);
		}
		\filldraw[fill=white] (\j,\j)++(0,4) circle (1.5pt);
	}
\end{tikzpicture}
\quad
\begin{tikzpicture}[scale=0.6]
	\foreach \i in {0,1,2,3}
	{
		\draw (\i,\i)++(-2,2) -- (\i,\i);
	}
	\foreach \j in {0,1,2}
	{
		\draw (-\j,\j)++(3,3) -- (-\j,\j);
	}
	\foreach \i in {2,3}
	{
		\foreach \j in {1,2}
		{
			\draw (\i-\j,\i+\j+0) -- (\i-\j,\i+\j+1);
		}
		\draw (\i-1,\i+2) -- (\i-2,\i+1+2)
				(\i-2,-\i+7) -- (\i-1,8-\i);
	}
	\foreach \i in {0,1,2,3}
	{
		\foreach \j in {0,1,2}
		{
			\filldraw[fill=white] (\i-\j,\i+\j) circle (1.5pt);
		}
	}
	\foreach \i in {2,3}
	{
		\foreach \j in {1,2}
		{
			\filldraw[fill=white] (\i-\j,\i+\j+1) circle (1.5pt);
		}
	}
\end{tikzpicture}
\quad
\begin{tikzpicture}[scale=0.6]
	\foreach \i in {0,1,2,3}
	{
		\draw (\i,\i)++(-2,2) -- (\i-1,\i+1) -- (\i-1,\i);
	}
	\foreach \j in {1,2}
	{
		\draw (-\j,\j)++(3,3) -- (-\j,\j) (1,\j-2)++(3,3) -- (1,\j-2);
	}
	\draw (-1,0) -- (2,3) (3,3) -- (4,4);
	\foreach \i in {2,3}
	{
		\foreach \j in {1,2}
		{
			\draw (\i-\j,\i+\j+0) -- (\i-\j,\i+\j+1);
		}
		\draw (\i+1,\i) -- (\i+1,\i+1) -- (\i-1,\i+2) -- (\i-2,\i+1+2)
				(\i-2,-\i+7) -- (\i-1,8-\i);
	}
	\foreach \i in {0,1,2,3}
	{
		\foreach \j in {1,2}
		{
			\draw (\i-1,\i+\j-1)++(2,-1) -- (\i-1,\i+\j-1);
		}
		\draw (\i-1,\i)++(2,-1) -- (\i+1,\i);
	}
	\foreach \i in {0,1,2,3}
	{
		\foreach \j in {1,2}
		{
			\filldraw[fill=white] (\i-\j,\i+\j) circle (1.5pt);
		}
		\filldraw[fill=white] (\i-1,\i) circle (1.5pt);
	}
	\foreach \i in {2,3}
	{
		\foreach \j in {1,2}
		{
			\filldraw[fill=white] (\i-\j,\i+\j+1) circle (1.5pt);
		}
	}
	\foreach \i in {0,1,2,3}
	{
		\foreach \j in {-1,0}
		{
			\filldraw[fill=white] (\i+1,\i+\j) circle (1.5pt);
		}
	}
	\filldraw[fill=white] (3,3) circle (1.5pt) (4,4) circle (1.5pt);
\end{tikzpicture}
\caption{The first eight Fibonacci-like cubes $\Phi_0$, $\Phi_1$, \dots, $\Phi_7$}\label{fig:mlc}
\end{figure}

 Let $L$ be a distributive lattice. A convex sublattice (interval) $K$ of $L$ is called a cutting if any maximal chain of $L$ must contain some elements of $K$.
   The convex expansion $L\boxplus K$ of $L$ with respect to $K$ is a distributive lattice on the set $L\cup K'$ ($K^{'}\cong K$ a copy of $K$) with the induced order:
  \begin{eqnarray*}
    x \leq y,  & & \text{ if $x \leq y$ in $L$,}\\
    x^{'}< y, & & \text{ if $x \leq y$ in $L$ with $x \in K$,}\\
    x < y^{'},  & & \text{ if $x < y$ in $L$ with $y \in K$,}\\
    x^{'} \leq y^{'}, & & \text{ if $x \leq y$ in $K$.}\\
  \end{eqnarray*}
As show in Figure~\ref{fig:lplk}, where $\hat1_K$ and $\hat0_K$ denote the maximum element and minimum element of the distributive lattice $K$, respectively (see\cite{aWangZY18}).
  \begin{figure}[!htb]  
	\centering
	\begin{tikzpicture}[scale=1]
	
	\filldraw[fill=lightgray] (0,0) -- (2,1) -- (2,2) -- (0,1) -- cycle;
	\draw (0,1) -- (-0.5,1.866) -- (1.5,2.866) -- (2,2)
	(0,0) -- (1*0.75,-1.732*0.75) -- (1*0.75+2,-1.732*0.75+1) -- (2,1);
	
	\node at (1,1) {$K$};
	\node at (-0.2,0) {$\hat{0}_K$};
	\node at (2.25,2) {$\hat1_K$};
	
	\begin{scope}[xshift=5cm,yshift=0.3535cm]
	\filldraw[fill=lightgray] (0,0) -- (2,1) -- (2,2) -- (0,1) -- cycle
	(0.707,-0.707) -- (2.707,0.293) -- (2.707,1.293) -- (0.707,0.293) -- cycle;
	\draw (0,1) -- (-0.5,1.866) -- (1.5,2.866) -- (2,2)
	(0.707,-0.707) -- (1*0.75+0.707,-1.732*0.75-0.707) -- (1*0.75+2.707,-1.732*0.75+1-0.707) -- (2.707,1-0.707)
	(0,0) -- (0.707,-0.707) (2,1) -- (2.707,0.293) (2,2) -- (2.707,1.293) (0,1) -- (0.707,0.293);
	\node at (1,1) {$K$};
	\node at (1+0.707,1-0.707) {$K'(\cong K)$};
	
	\node at (1.4785,-2-0.3535) {$L\boxplus K$};
	\end{scope}
	\draw[->] (3.25,0.509) -- (4,0.509);
	
	\node at (1.125,-2) {$L$};
	
	\end{tikzpicture}
	\caption{The finite distributive lattices $L$ and $L\boxplus K$}\label{fig:lplk}
\end{figure}
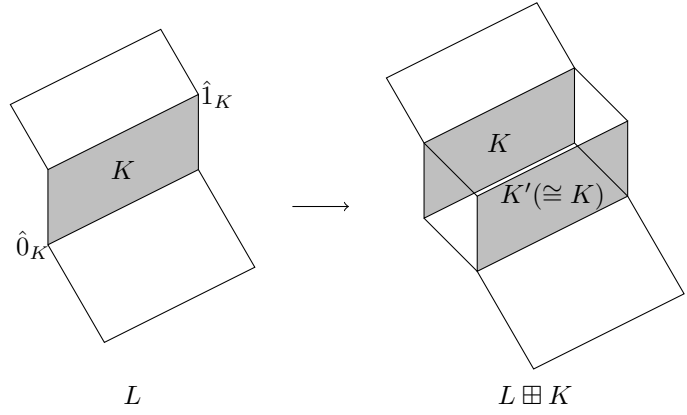

\begin{lemma}\label{lem:conexp}

The filter lattice $\mathcal{F}(P)$ is considered as a convex expansion, that is for every $x\in P$,
	\[
	\mathcal{F}(P)\cong \mathcal{F}(P-x)\boxplus \mathcal{F}(P*x),
	\]
  where $P-x$ and $P*x$ the induced subposets on $P\setminus\{x\}$ and $P\setminus\{y \in P|y \leq x \text{ or } x \leq y\}$, respectively.

\end{lemma}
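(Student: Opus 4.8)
The plan is to classify the filters of $P$ by whether or not they contain $x$, and to match the two classes with the two parts of a convex expansion of $\mathcal{F}(P-x)$. First I would partition $P\setminus\{x\}$ into the strict down-set $D=\{y\in P: y<x\}$, the strict up-set $U=\{y\in P: y>x\}$, and the remainder $M=P*x$ of elements incomparable with $x$. Two elementary observations drive everything: (i) a filter $Y$ of $P$ with $x\notin Y$ is the same thing as a filter $Z$ of $P-x$ with $Z\cap D=\varnothing$, since any element of $D$ lying in $Z$ would force $x\in Z$; (ii) a filter $Y$ of $P$ with $x\in Y$ is exactly $Z\cup\{x\}$ for a filter $Z$ of $P-x$ with $U\subseteq Z$. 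Writing $B'$ and $A'$ for these two sets of filters of $P-x$, the one-line fact that $d<x<u$ forces $u\in Z$ whenever $d\in Z$ gives $A'\cup B'=\mathcal{F}(P-x)$, while $A'\cap B'=K:=\{Z\in\mathcal{F}(P-x): U\subseteq Z,\ Z\cap D=\varnothing\}$.

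Next I would check that $K$ is a \emph{cutting} of $L:=\mathcal{F}(P-x)$ and that $K\cong\mathcal{F}(P*x)$. Convexity and closure under meet and join are immediate from the description $K=\{Z: U\subseteq Z\subseteq U\cup M\}$, with largest element $\hat1_K=U$ and smallest element $\hat0_K=U\cup M$ in the anti-inclusion order; the assignment $W\mapsto U\cup W$ is then an order isomorphism $\mathcal{F}(P*x)\to K$, with inverse $Z\mapsto Z\cap M$. That every maximal chain of $L$ meets $K$ follows because along such a chain one deletes minimal elements one at a time, and no $u\in U$ can be minimal in a filter that still contains some $d\in D$ (as $d<u$); hence at the step where the last element of $D$ is deleted, the current filter contains all of $U$ and no element of $D$, so it lies in $K$. (The degenerate cases $D=\varnothing$, $U=\varnothing$, or $P=\{x\}$ are handled separately in one line each.)

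With this in hand I would define $\varphi\colon\mathcal{F}(P)\to L\boxplus K$ by $\varphi(Y)=Y$ if $x\notin Y$, by $\varphi(Y)=Y\setminus\{x\}$ if $x\in Y$ and $Y\setminus\{x\}\notin K$, and by $\varphi(Y)=(Y\setminus\{x\})'$ if $x\in Y$ and $Y\setminus\{x\}\in K$. Using (i)--(ii) together with $A'\cup B'=L$ and $A'\cap B'=K$, this is a bijection: the filters avoiding $x$ fill up $B'$, and those containing $x$ fill up $A'\setminus K$ together with the shadow copy $K'$; equivalently, the order-preserving surjection $Y\mapsto Y\setminus\{x\}$ of $\mathcal{F}(P)$ onto $\mathcal{F}(P-x)$ has fibres of size two precisely over $K$ and size one elsewhere, exactly matching the collapse $L\boxplus K\to L$. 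Finally I would verify that $\varphi$ is an order isomorphism by comparing $Y_1\supseteq Y_2$ with $\varphi(Y_1)\le\varphi(Y_2)$ in the regimes determined by whether $x$ lies in $Y_1$, in $Y_2$, and whether the associated filter of $P-x$ lies in $K$; in each regime $Y_1\supseteq Y_2$ reduces to an inclusion of filters of $P-x$, to be matched against the appropriate one of the four defining inequalities of $L\boxplus K$ (for instance the clause ``$z'<y$ for $z\in K$'' handles $x\in Y_1\cap Y_2$ with $Y_1\setminus\{x\}\in K$ and $Y_2\setminus\{x\}\notin K$).

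I expect the routine-but-delicate part to be this last verification: one must keep careful track of strict versus non-strict inequalities, and exploit the fact that, since $A'\cap B'=K$, two distinct filters of $P-x$ on opposite sides of $K$ are never comparable in a way that conflicts with the expansion order. The cutting property in the second step is the only genuinely structural point and the one I would write out in full detail; the rest is bookkeeping with up-sets.
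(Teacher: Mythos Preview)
Your proposal is correct and complete. Note, however, that the paper does not actually prove this lemma: it is stated without proof, being imported from the authors' earlier work \cite{aWangZY18} on convex expansions of finite distributive lattices. So there is no in-paper argument to compare against.

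That said, your approach is exactly the natural one and almost certainly coincides with what the cited reference does: split $\mathcal{F}(P)$ according to membership of $x$, identify the two pieces with the subsets $A'=\{Z:U\subseteq Z\}$ and $B'=\{Z:Z\cap D=\varnothing\}$ of $\mathcal{F}(P-x)$, and observe that $A'\cup B'=\mathcal{F}(P-x)$ while $A'\cap B'\cong\mathcal{F}(P*x)$ is a cutting. Your verification of the cutting property via the ``last element of $D$ removed along a maximal chain'' argument is clean; the bookkeeping for the order isomorphism is routine as you say. One small point worth making explicit in the write-up: when you check that $\varphi$ respects the order in the mixed case $x\in Y_1$, $x\notin Y_2$ with $Y_1\supseteq Y_2$, you should observe that $Y_2\in B'$ and $Y_1\setminus\{x\}\in A'$ with $Y_1\setminus\{x\}\supseteq Y_2$, and then use $A'\cup B'=L$ to see that the comparison in $L\boxplus K$ goes through whether $Y_1\setminus\{x\}$ lands in $K$ (giving the clause $z'<y$) or in $A'\setminus K$ (giving $z\le y$ in $L$).
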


  It is known that $\Gamma_n \cong \Gamma_{n-1} \boxplus \Gamma_{n-2}  \cong (\Gamma_{n-2} \boxplus \Gamma_{n-2}) \boxplus \Gamma_{n-3} \; (n\ge 4)$ \cite{aKlavz2013}. FLC $\Phi_n$ has similar structure with the Fibonacci cube and matchable Lucas cubes \cite{aWangZY18}, as shown in Figure~\ref{fig:strucf} and Figure~\ref{fig:struc}.

\begin{lemma}\label{lem:struc}
  Let $\Phi_n$ be the $n$-th FLC defined above. Then
  \[
\Phi_n \cong \Gamma_{n-1}^* \boxplus \Gamma_{n-4}^* \cong \left((\Gamma_{n-3}^* \boxplus \Gamma_{n-3}^*) \boxplus \Gamma_{n-4}^*\right) \boxplus \Gamma_{n-4}^* \; (n\ge 5).
\]
or 
  \[
  \Phi_n \cong \Phi_{n-1} \boxplus \Phi_{n-2} \cong (\Phi_{n-2} \boxplus \Phi_{n-2}) \boxplus \Phi_{n-3} \; (n\ge 6).
  \]
\end{lemma}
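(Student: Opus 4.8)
The plan is to obtain both displays from Lemma~\ref{lem:conexp} by deleting, in each case, one well-chosen vertex of $\phi_n$ and identifying the two induced subposets that result.

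For the first display I would delete $x_3$. From the cover relations of $\phi_n$ one reads off that $x_3$ is a minimal element whose principal filter is $\{x_1,x_2,x_3,x_4\}$, so $\phi_n*x_3$ is the subposet induced on $\{x_5,\dots,x_n\}$; under the relabelling $x_{k+4}\mapsto z_k$ its cover relations become $z_1<z_2>z_3<z_4>\cdots$, so $\phi_n*x_3\cong Z_{n-4}$ and $\mathcal F(\phi_n*x_3)\cong\Gamma_{n-4}$ (which equals $\Gamma_{n-4}^{*}$ as far as the underlying graph is concerned). Deleting $x_3$ removes only the cover $x_2>x_3$ and, $x_3$ being minimal, creates no new cover, so $\phi_n-x_3$ has cover relations $x_1>x_2<x_4>x_5<x_6>\cdots$; relabelling $y_1=x_1$, $y_2=x_2$, $y_k=x_{k+1}$ for $k\ge3$ exhibits this as the zigzag $y_1>y_2<y_3>\cdots$ on $n-1$ vertices, i.e.\ $\phi_n-x_3\cong Z_{n-1}^{*}$ and hence $\mathcal F(\phi_n-x_3)\cong\Gamma_{n-1}^{*}$. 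By Lemma~\ref{lem:conexp} this gives $\Phi_n\cong\Gamma_{n-1}^{*}\boxplus\Gamma_{n-4}^{*}$ for $n\ge5$. For the second isomorphism on that line I would take the known Fibonacci-cube identity $\Gamma_{n-1}\cong(\Gamma_{n-3}\boxplus\Gamma_{n-3})\boxplus\Gamma_{n-4}$ \cite{aKlavz2013}, pass to order duals using $(L\boxplus K)^{*}\cong L^{*}\boxplus K^{*}$ (which is immediate from the four defining relations of $\boxplus$ after interchanging $K$ with its copy $K'$), and substitute the resulting expression for $\Gamma_{n-1}^{*}$.

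For the second display I would instead delete the last vertex $x_n$. Whatever the parity of $n$, the vertex $x_n$ is an endpoint of the zigzag tail whose only comparable element is $x_{n-1}$, so the elements comparable with $x_n$ are precisely $\{x_{n-1},x_n\}$; hence $\phi_n*x_n$ is the subposet on $\{x_1,\dots,x_{n-2}\}$, which is literally $\phi_{n-2}$, while $\phi_n-x_n$ is literally $\phi_{n-1}$. Lemma~\ref{lem:conexp} gives $\Phi_n\cong\Phi_{n-1}\boxplus\Phi_{n-2}$ for $n\ge6$. Applying this decomposition once more to the outer factor, $\Phi_{n-1}\cong\Phi_{n-2}\boxplus\Phi_{n-3}$, yields $\Phi_n\cong(\Phi_{n-2}\boxplus\Phi_{n-3})\boxplus\Phi_{n-2}$, and regrouping the two nested expansions exactly as one does for $\Gamma_n\cong(\Gamma_{n-2}\boxplus\Gamma_{n-2})\boxplus\Gamma_{n-3}$ gives $\Phi_n\cong(\Phi_{n-2}\boxplus\Phi_{n-2})\boxplus\Phi_{n-3}$.

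The bookkeeping of which covers survive a deletion and the relabellings are routine; the step I expect to be the real obstacle is the regrouping $(A\boxplus C)\boxplus B\cong(A\boxplus B)\boxplus C$ used in the second isomorphism of each line. To make it precise one must check that, after expanding the inner factor $L$ (here $\Phi_{n-1}$, resp.\ $\Gamma_{n-1}^{*}$), the copy created by the outer expansion still sits as a cutting of the enlarged lattice and is matched to the same interval, so that the order of the two expansions does not matter. This can be verified by applying Lemma~\ref{lem:conexp} twice, to $x_n$ and then to $x_{n-1}$ inside $\phi_{n-1}$, and comparing the two ways of reassembling $\mathcal F(\phi_n)$ from four smaller filter lattices; alternatively it may be taken over verbatim from the Fibonacci-cube case, since $\phi_n$ agrees with $Z_n$ in structure away from $x_1,x_2,x_3$.
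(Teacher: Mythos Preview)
Your approach is exactly the paper's: the authors' entire proof is the one-line remark that the result follows from Lemma~\ref{lem:conexp} by taking $x=x_n$ and $x=x_3$, and your proposal simply unpacks this, correctly identifying $\phi_n-x_3\cong Z_{n-1}^{*}$, $\phi_n*x_3\cong Z_{n-4}$, $\phi_n-x_n=\phi_{n-1}$, and $\phi_n*x_n=\phi_{n-2}$. The paper does not justify the nested ``regrouping'' step you flag; your plan to obtain it either by iterating Lemma~\ref{lem:conexp} on two vertices or by transporting the known Fibonacci-cube identity is a reasonable way to fill what the paper leaves implicit.
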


\begin{proof}
  By definition of $\Phi_n$, the result is easily obtained by putting $x=x_n$ and $x=x_3$ in Lemma \ref{lem:conexp}, respectively.
\end{proof}

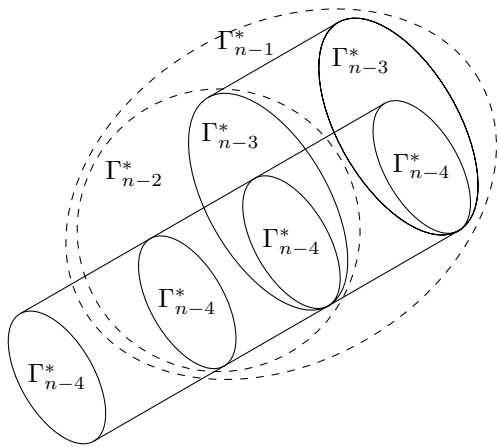
\begin{figure}[!htb]
	\centering
	\begin{tikzpicture}[scale=0.8,rotate=30]
	\draw (-4.5,-2) -- (2.5,-2) (0,2) -- (2.5,2) (-4.5,0.448) -- (2.5,0.448);
	\draw (2.5,0) ellipse (1cm and 2cm)
	(2.5,0) ellipse (1cm and 2cm)
	(0,0) ellipse (1cm and 2cm)
	(-2,-0.776) ellipse (0.612cm and 1.224cm)
	(0,-0.776) ellipse (0.612cm and 1.224cm)
	(2.5,-0.776) ellipse (0.612cm and 1.224cm)
	(-4.5,-0.776) ellipse (0.612cm and 1.224cm);;
	\draw[dashed] (-0.95,0) circle (2.35cm)
	(0.25,0) ellipse (3.8cm and 2.8cm);
	
	\node at (1,2.45) {$\Gamma_{n-1}^*$};
	\node at (-1.7,1.5) {$\Gamma_{n-2}^*$};
	\node at (0,1.25) {$\Gamma_{n-3}^*$};
	\node at (2.5,1.25) {$\Gamma_{n-3}^*$};
	\node at (-2,-0.776) {$\Gamma_{n-4}^*$};
	\node at (0,-0.776) {$\Gamma_{n-4}^*$};
	\node at (2.5,-0.776) {$\Gamma_{n-4}^*$};
	\node at (-4.5,-0.776) {$\Gamma_{n-4}^*$};
	\end{tikzpicture}
	\caption{The structure of $\Phi_n$ given by $\Gamma_n$}\label{fig:strucf}
\end{figure}

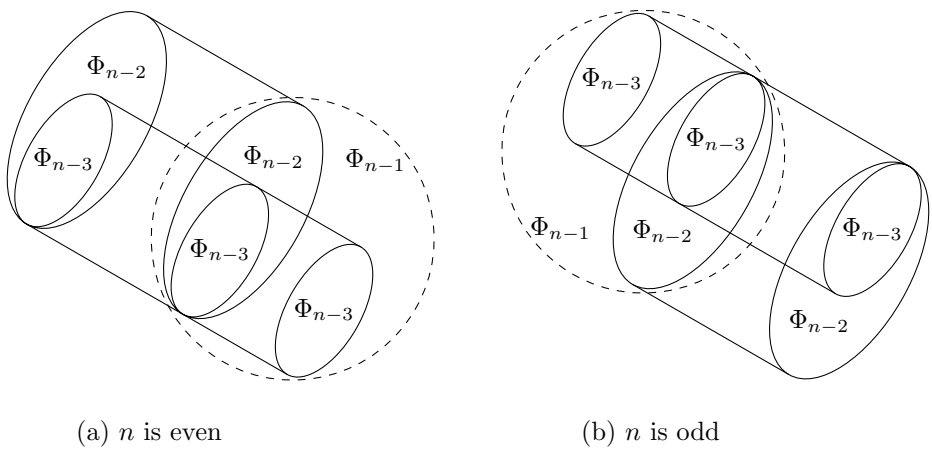
\begin{figure}[!htb]
	\centering
	\begin{tikzpicture}[scale=0.8,rotate=-30]
		\draw (-3,2) -- (0,2) (-3,-2) -- (2,-2) (-3,0.448) -- (2,0.448);
		\draw (-3,0) ellipse (1cm and 2cm)
		(0,0) ellipse (1cm and 2cm)
		(2,-0.776) ellipse (0.612cm and 1.224cm)
		(0,-0.776) ellipse (0.612cm and 1.224cm)
		(-3,-0.776) ellipse (0.612cm and 1.224cm);
		\draw[dashed] (0.95,0) circle (2.35cm);
		
		\node at (1.5,1.8) {$\Phi_{n-1}$};
		\node at (0,1) {$\Phi_{n-2}$};
		\node at (-3,1) {$\Phi_{n-2}$};
		\node at (0,-0.776) {$\Phi_{n-3}$};
		\node at (-3,-0.776) {$\Phi_{n-3}$};
		\node at (2,-0.776) {$\Phi_{n-3}$};
		
		\node at (0.5,-4) {(a)~$n$ is even};
	\end{tikzpicture}\quad
	\begin{tikzpicture}[scale=0.8,rotate=-30]
		\draw (3,-2) -- (0,-2) (3,2) -- (-2,2) (3,-0.448) -- (-2,-0.448);
		\draw (3,0) ellipse (1cm and 2cm)
		(0,0) ellipse (1cm and 2cm)
		(-2,0.776) ellipse (0.612cm and 1.224cm)
		(0,0.776) ellipse (0.612cm and 1.224cm)
		(3,0.776) ellipse (0.612cm and 1.224cm);
		\draw[dashed] (-0.95,0) circle (2.35cm);
		
		\node at (-1.5,-1.8) {$\Phi_{n-1}$};
		\node at (0,-1) {$\Phi_{n-2}$};
		\node at (3,-1) {$\Phi_{n-2}$};
		\node at (0,0.776) {$\Phi_{n-3}$};
		\node at (3,0.776) {$\Phi_{n-3}$};
		\node at (-2,0.776) {$\Phi_{n-3}$};
		
		\node at (1.5,-4) {(b)~$n$ is odd};
	\end{tikzpicture}
	\caption{The recursive structure of $\Phi_n$}\label{fig:struc}
\end{figure}

  By Lemma  \ref{lem:struc} and the fact that $|V(\Gamma_{n})|=F_{n+2}$, we have folloing relation. 
\begin{corollary}
	The number of vertices of $\Phi_n$ is $2F_n$.
\end{corollary}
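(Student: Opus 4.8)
The plan is to read the vertex count straight off the convex-expansion decomposition supplied by Lemma~\ref{lem:struc}. By the very definition of the convex expansion, the element set of $L\boxplus K$ is $L\cup K'$ with $K'\cong K$, so $|V(L\boxplus K)|=|V(L)|+|V(K)|$; moreover passing to the dual poset reverses every edge of a Hasse diagram but leaves its underlying graph unchanged, so $|V(\Gamma_m^*)|=|V(\Gamma_m)|=F_{m+2}$. Feeding the decomposition $\Phi_n\cong\Gamma_{n-1}^*\boxplus\Gamma_{n-4}^*$ (valid for $n\ge 5$) into this identity gives $|V(\Phi_n)|=|V(\Gamma_{n-1})|+|V(\Gamma_{n-4})|=F_{n+1}+F_{n-2}$.

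It then remains only to recognise the Fibonacci identity $F_{n+1}+F_{n-2}=2F_n$, which is immediate from two applications of the recurrence: $F_{n+1}+F_{n-2}=F_n+F_{n-1}+F_{n-2}=F_n+F_n=2F_n$. Together with the previous paragraph this establishes the formula for all $n\ge 5$, and the finitely many remaining values $n\le 4$ are handled by direct inspection: for $n\le 3$ the $S$-fence $\phi_n$ is a chain, whose filter lattice is a chain on $n+1$ elements, and $\phi_4$ has $6$ filters, as one reads off from Figure~\ref{fig:mlc}.

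An alternative, which avoids the $\Gamma$-side bookkeeping, is to stay inside the FLC family: from $\Phi_n\cong\Phi_{n-1}\boxplus\Phi_{n-2}$ (Lemma~\ref{lem:struc}, $n\ge 6$) the additivity of $|V(\cdot)|$ under $\boxplus$ gives $|V(\Phi_n)|=|V(\Phi_{n-1})|+|V(\Phi_{n-2})|$, so the sequence $\bigl(|V(\Phi_n)|\bigr)_n$ obeys the Fibonacci recurrence and hence agrees with $(2F_n)_n$ as soon as two consecutive values are matched against the base cases. Neither route presents any real obstacle; the only point requiring attention is fixing the indexing convention for $F_n$ — here pinned down by $|V(\Gamma_n)|=F_{n+2}$, i.e.\ $F_0=0$, $F_1=1$ — and, accordingly, restricting to the range of $n$ in which the closed form holds verbatim.
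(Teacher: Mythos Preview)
Your approach is exactly the paper's: the paper's entire proof is the one-line remark ``By Lemma~\ref{lem:struc} and the fact that $|V(\Gamma_{n})|=F_{n+2}$,'' and you have correctly unpacked this into the additivity $|V(L\boxplus K)|=|V(L)|+|V(K)|$ together with the Fibonacci identity $F_{n+1}+F_{n-2}=2F_n$. One small caution on the base cases: your ``direct inspection'' of $n\le 4$, carried out, actually shows $|V(\Phi_0)|=1\ne 2F_0=0$ and $|V(\Phi_2)|=3\ne 2F_2=2$, so the corollary as stated only holds for $n\ge 3$ (the paper tacitly concedes this in its later corollaries, e.g.\ Corollary~\ref{cor:fibs}); you should flag the restriction rather than assert that inspection confirms the formula.
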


\section{Enumerative properties}

  Let $F_n$ be the $n$-th Fibonacci number defined by: $F_0=0$, $F_1=1$, $F_n=F_{n-1}+F_{n-2}$, for $n\ge 2$. The generating function of the sequence $\{F_n\}_{n=0}^\infty$ is
  \[
  \sum_{n\ge 0}F_nx^n=\frac{x}{1-x-x^2}.
  \]

   There is an interesting relation between Fibonaci numbers and binomial coefficients:

  \[
  \sum_{k = 0}^{\frac{n}{2}} \binom{n-k}{k} = F_{n+1}.
  \]

  The enumerative properties of Fibonacci cubes and Lucas cubes has been extensively studied $[3\text{-}9]$.
  In this section, we obtain some enumerative properties of $\Phi_n$, such as  rank generating functions i.e. rank polynomials, cube polynomials, maximal cube polynomial, degree sequences polynomial, indegree and outdegree polynomials. Some results are related to Fibonacci sequences since the number of vertices of $\Phi_n$ equals to $2F_n$. The number of the maximal $k$-dimensional cubes in $\Phi_n$ is a Padovan number.

  Note that hereafter set $\binom{n}{k}=0$ whenever the condition $0 \leq k \leq n$ is invalid for any integers $n$ and $k$.

  The proof of the conclusion about the generating function is similar to the proof of Theorem~\ref{th:gf-AB},and the proof of Propositions \ref{prop:RF-q},~\ref{pro:ou-d} are similar to the proof of Proposition~\ref{prop:gf-H} (see \cite{aKlavM12}, \cite{bStanl11}).

\subsection{Rank generating functions}\label{subsec:rank}

The rank generating function of $\Phi_n$ is $R_n(x) := R(\Phi_n,x)= \sum_{k\ge 0}r_{n,k}x^k$, where $r_{n,k}:=r_k(\Phi_n)$ denoted the number of the elements of rank $k$ in $\Phi_n$.
The first few of $R_n(x)$ are listed.
\begin{align*}
  R_0(x) &= 1 \\
  R_1(x) &= 1+x \\
  R_2(x) &= 1+x+x^2\\
  R_3(x) &= 1+x+x^2+x^3 \\
  R_4(x) &= 1+x+x^2+2x^3+x^4  \\
  R_5(x) &= 1+2x+2x^2+2x^3+2x^4+x^5\\
  R_6(x) &= 1+2x+3x^2+3x^3+3x^4+3x^5+x^6\\
  R_7(x) &= 1+3x+4x^2+5x^3+5x^4+4x^5+3x^6+x^7 \\
  R_8(x) &= 1+3x+5x^2+7x^3+8x^4+7x^5+6x^6+4x^7+x^8\\
  R_9(x) &= 1+4x+7x^2+10x^3+12x^4+12x^5+10x^6+7x^7+4x^8+x^9.
\end{align*}
\begin{lemma}[\cite{aWangZY18}]\label{le:rank}

  	Let $L$ be a finite distributive lattice and $K$ is a cutting of $L$. The rank generating function of $L\boxplus K$ is
	\[
	R(L\boxplus K,x) =
	\begin{cases}
	R(L,x) + x^{h_L(\hat0_K)+1}R(K,x), & \text{if } \hat1_K = \hat1_L; \\
	R(K,x) + xR(L,x), & \text{if } \hat0_K = \hat0_L.
	\end{cases}
	\]
\end{lemma}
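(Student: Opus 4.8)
The plan is to determine the rank function $h$ of $L\boxplus K$ explicitly in terms of the rank function $h_L$ of $L$, and then simply read off the coefficients of $R(L\boxplus K,x)$. Since $L\boxplus K$ is a finite distributive lattice it is graded, so $h$ is well defined. Its underlying set is $L\cup K'$, where $K'$ is a disjoint copy of the interval $K=[\hat0_K,\hat1_K]$; the convenient picture is that $K$ has been ``doubled'', with $K'$ sitting immediately below $K$ in the doubled region (so $c'\prec c$ for every $c\in K$), both copies carrying the order of $K$, and every element of $L\setminus K$ attached to $K\cup K'$ exactly as it was attached to $K$ in $L$. Convexity and gradedness of $L$ give $h_L(c)=h_L(\hat0_K)+h_K(c)$ for $c\in K$, hence the identity
\[
\sum_{c\in K}x^{h_L(c)}=x^{h_L(\hat0_K)}R(K,x),
\]
which is essentially all I need on the $K$-side.

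Suppose first $\hat1_K=\hat1_L$. Then $K$ is the principal filter of $\hat0_K$, so $L\setminus K=\{u:u\not\ge\hat0_K\}$ and (unless $K=L$, in which case the claim is trivial) $\hat0_{L\boxplus K}=\hat0_L$. I would check: for $u\in L\setminus K$ the interval $[\hat0_L,u]$ of $L\boxplus K$ misses the doubled region and is isomorphic to $[\hat0_L,u]$ in $L$, so $h(u)=h_L(u)$; the element $\hat0_K'$ covers exactly the lower covers of $\hat0_K$ in $L$, so a maximal chain reaches $\hat0_K'$ after $h_L(\hat0_K)$ steps and then runs up the lower copy, giving $h(c')=h_L(\hat0_K)+h_K(c)=h_L(c)$ for $c'\in K'$; and passing from the lower copy to the upper copy costs one further step, giving $h(c)=h_L(c)+1$ for $c\in K$. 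Summing over $L\boxplus K=(L\setminus K)\cup K\cup K'$ and using the displayed identity,
\[
R(L\boxplus K,x)=\sum_{u\in L\setminus K}x^{h_L(u)}+\sum_{c\in K}x^{h_L(c)}+\sum_{c\in K}x^{h_L(c)+1}=R(L,x)+x\sum_{c\in K}x^{h_L(c)}=R(L,x)+x^{h_L(\hat0_K)+1}R(K,x).
\]

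Now suppose $\hat0_K=\hat0_L$. Then $K$ is a principal ideal, $h_L(\hat0_K)=0$, and $\hat0_K'$ becomes the global minimum of $L\boxplus K$; the extra covering step is incurred at the very bottom, so all of $L$ is pushed up by one, i.e.\ $h(z)=h_L(z)+1$ for every $z\in L$, while $h(c')=h_L(c)$ for $c'\in K'$ as before. Therefore
\[
R(L\boxplus K,x)=\sum_{z\in L}x^{h_L(z)+1}+\sum_{c\in K}x^{h_L(c)}=xR(L,x)+R(K,x),
\]
using $\sum_{c\in K}x^{h_L(c)}=R(K,x)$ since $h_L(\hat0_K)=0$.

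The substantive part is the rank analysis just invoked: one must argue, from the defining relations of the convex expansion and using that $K$ is convex and a cutting of $L$, that every maximal chain of $L\boxplus K$ is obtained from a maximal chain of $L$ by inserting exactly one extra covering step while it crosses $K$, and that this step is localized at $\hat0_K'$ in the first case and below $\hat0_L$ in the second. The cutting hypothesis --- every maximal chain of $L$ meets $K$ --- is precisely what makes this $+1$ shift uniform, so that $L\boxplus K$ is graded of height $h_L(\hat1_L)+1$; once $h$ is pinned down the two generating-function identities are immediate. One could equally run the argument on the coefficients directly, where it reads $r_k(L\boxplus K)=r_k(L)+r_{k-1-h_L(\hat0_K)}(K)$ and $r_k(L\boxplus K)=r_k(K)+r_{k-1}(L)$ in the two cases, but the chain/rank viewpoint seems cleanest.
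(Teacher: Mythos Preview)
The paper does not prove this lemma at all: it is quoted verbatim from the reference \cite{aWangZY18} and used as a black box to derive Proposition~\ref{prop:rec-R}, so there is no in-paper argument to compare your proposal against.

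That said, your approach --- pin down the rank function $h$ on $L\boxplus K$ elementwise and then sum --- is the natural one, and under the intended interpretation of the convex expansion (Day-type interval doubling, so that the doubled region $K\cup K'$ is order-isomorphic to $K\times\mathbf 2$ rather than interleaved) your rank computations are correct in both cases. One small point worth making explicit: in the second case you assert that every cover relation of $L$ survives in $L\boxplus K$, which is what gives $h(z)=h_L(z)+1$ uniformly for $z\in L$; this uses that $K$ is a principal ideal, so a cover $u\lessdot_L v$ with $u\notin K$ and $v\in K$ (the only kind that could be broken by inserting $v'$) cannot occur. With that observation your argument is complete.
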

where $h_L(x)$ denote the height of $x$ in $L$ for $x \in L$.

By the Lemma~\ref{le:rank} we have
\begin{proposition}\label{prop:rec-R}
	For $n\ge 5$
	\[
	R_n(x) =
	\begin{cases}
	xR_{n-1}(x) + R_{n-2}(x), & \mbox{if } 2\nmid n, \\
	R_{n-1}(x) + x^2R_{n-2}(x), & \mbox{if } 2\mid n.
	\end{cases}
	\]
\end{proposition}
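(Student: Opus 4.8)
The plan is to obtain the recurrence directly from the convex-expansion structure of $\mathcal{F}(\phi_n)$ and then quote Lemma~\ref{le:rank}. Applying Lemma~\ref{lem:conexp} with $x=x_n$ (this is the first isomorphism of Lemma~\ref{lem:struc}) presents $\Phi_n\cong L\boxplus K$ with $L=\mathcal{F}(\phi_n-x_n)\cong\Phi_{n-1}$ and $K=\mathcal{F}(\phi_n*x_n)\cong\Phi_{n-2}$ for $n\ge 5$; indeed, inspecting the cover relations of $\phi_n$ shows that deleting $x_n$ leaves the $S$-fence $\phi_{n-1}$, while the elements of $\phi_n$ comparable to $x_n$ are exactly $\{x_{n-1},x_n\}$, so $\phi_n*x_n$ is the $S$-fence $\phi_{n-2}$ on $\{x_1,\dots,x_{n-2}\}$. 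Thus $R(L,x)=R_{n-1}(x)$ and $R(K,x)=R_{n-2}(x)$, and since $\mathcal{F}(\phi_n)$ is a convex expansion, $K$ is a cutting of $L$, so Lemma~\ref{le:rank} applies as soon as we locate $K$ inside $L$.

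Next I would pin down the position of the cutting. In the filter/convex-expansion correspondence, $K$ sits inside $L=\mathcal{F}(\phi_n-x_n)$ as the set of filters $Y$ of $\phi_n-x_n$ for which both $Y$ and $Y\cup\{x_n\}$ are filters of $\phi_n$, equivalently $Y=W\cup\{y\in\phi_n:y>x_n\}$ with $W$ ranging over the filters of $\phi_n*x_n$. Reading off bottoms and tops in the anti-inclusion order: $\hat0_L$ is the whole poset $\phi_n-x_n$ and $\hat1_L=\emptyset$, while $\hat0_K=\phi_n\setminus\{y:y\le x_n\}$ and $\hat1_K=\{y:y>x_n\}$. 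Hence $\hat0_K=\hat0_L$ precisely when $x_n$ is minimal in $\phi_n$, and $\hat1_K=\hat1_L$ precisely when $x_n$ is maximal in $\phi_n$ — which is exactly the split by the parity of $n$, since $x_n=x_{2i+1}$ is minimal for odd $n$ and $x_n=x_{2i}$ is maximal for even $n$.

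It then remains to feed this into Lemma~\ref{le:rank}. For $n$ odd we are in the case $\hat0_K=\hat0_L$, so $R_n(x)=R(K,x)+xR(L,x)=R_{n-2}(x)+xR_{n-1}(x)$. For $n$ even we are in the case $\hat1_K=\hat1_L$, and the only remaining computation is the height $h_L(\hat0_K)$: since the elements of $\phi_n$ below $x_n$ are just $\{x_n,x_{n-1}\}$, we have $\hat0_K=(\phi_n-x_n)\setminus\{x_{n-1}\}$, obtained from $\hat0_L$ by deleting the single minimal element $x_{n-1}$, so $h_L(\hat0_K)=1$ and $R_n(x)=R(L,x)+x^{1+1}R(K,x)=R_{n-1}(x)+x^2R_{n-2}(x)$. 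The borderline value $n=5$ (below the range $n\ge 6$ of the second isomorphism of Lemma~\ref{lem:struc}) is handled by the same computation applied to $\phi_5-x_5\cong\phi_4$ and $\phi_5*x_5\cong\phi_3$, or simply by comparing with the listed $R_3,R_4,R_5$. I expect the genuine work to be concentrated in the middle step — correctly identifying $\hat0_K$ and $\hat1_K$ inside $L$ in the anti-inclusion order, and in particular not interchanging the roles of top and bottom — whereas the two final substitutions are routine.
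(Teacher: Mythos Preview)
Your proposal is correct and follows essentially the same route as the paper, which simply asserts that the proposition follows from Lemma~\ref{le:rank} applied to the convex-expansion decomposition $\Phi_n\cong\Phi_{n-1}\boxplus\Phi_{n-2}$. You have merely supplied the details the paper omits: identifying the cutting $K$ inside $L=\mathcal{F}(\phi_n-x_n)$, reading off $\hat0_K,\hat1_K$ in the anti-inclusion order, and matching the parity of $n$ to the two cases of Lemma~\ref{le:rank}; all of this is done correctly.
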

Let $A_m(x) = R_{2m}(x)$ and $B_m(x) = R_{2m+1}(x)$, we have
 \begin{align*}
 \begin{cases}
   A_m(x)=B_{m-1}(x)+x^2A_{m-1}(x), &(m\ge 3),\\
   B_m(x)=xA_{m}(x)+B_{m-1}(x), &(m\ge 2).
 \end{cases}
 \end{align*}
 $A_m(x)$ and $B_m(x)$ have the recurrence relations:
\begin{proposition}\label{prop:rec-AB}
	
	\[
	\begin{cases}
	A_m(x) = (1+x+x^2)A_{m-1}(x) - x^2A_{m-2}(x), & (m\ge 4), \\
	B_m(x) = (1+x+x^2)B_{m-1}(x) - x^2B_{m-2}(x), & (m\ge 2).
	\end{cases}
	\]
\end{proposition}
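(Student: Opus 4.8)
The plan is to eliminate one of the two families from the coupled system
\[
A_m = B_{m-1} + x^2 A_{m-1}\ (m\ge 3),\qquad B_m = xA_m + B_{m-1}\ (m\ge 2),
\]
(which is just Proposition~\ref{prop:rec-R} rewritten through $A_m = R_{2m}$, $B_m = R_{2m+1}$), thereby collapsing each of the two mutually recursive families into a single second‑order recurrence. Everything happens inside $\mathbb{Z}[x]$, so only polynomial arithmetic is needed.

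First I would treat $A_m$. The first relation gives $B_{m-1} = A_m - x^2A_{m-1}$ for $m\ge 3$, hence also $B_m = A_{m+1} - x^2A_m$ for $m\ge 2$; substituting both expressions into the second relation $B_m = xA_m + B_{m-1}$ and collecting terms yields $A_{m+1} = (1+x+x^2)A_m - x^2A_{m-1}$. The binding index constraint here is $m\ge 3$ (it comes from the expression for $B_{m-1}$), so after reindexing this is precisely the asserted recurrence for $A$, valid for $m\ge 4$.

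For $B_m$ I would argue symmetrically. From the second relation, $xA_m = B_m - B_{m-1}$ (valid for $m\ge 2$) and $xA_{m-1} = B_{m-1} - B_{m-2}$ (valid for $m\ge 3$); multiplying the first relation by $x$ gives $xA_m = xB_{m-1} + x^2(xA_{m-1})$, and substituting the two identities just obtained turns this into $B_m - B_{m-1} = xB_{m-1} + x^2(B_{m-1} - B_{m-2})$, i.e.\ $B_m = (1+x+x^2)B_{m-1} - x^2B_{m-2}$, for $m\ge 3$. The one instance the coupled system cannot reach is $m=2$, which I would check directly against the initial values $B_0 = R_1 = 1+x$, $B_1 = R_3 = 1+x+x^2+x^3$, $B_2 = R_5 = 1+2x+2x^2+2x^3+2x^4+x^5$ from the list above: indeed $(1+x+x^2)(1+x+x^2+x^3) - x^2(1+x) = 1+2x+2x^2+2x^3+2x^4+x^5 = B_2$.

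There is no genuine obstacle beyond careful bookkeeping of the ranges of validity; the only subtlety is that the $B$‑recurrence is claimed from $m\ge 2$ whereas the elimination delivers it only from $m\ge 3$, so the base case $m=2$ must be dispatched separately. A slicker but equivalent route is to write the system as $\binom{A_m}{B_m} = M\binom{A_{m-1}}{B_{m-1}}$ with $M = \left(\begin{smallmatrix} x^{2} & 1\\ x^{3} & x+1 \end{smallmatrix}\right)$ and invoke Cayley–Hamilton, $M^2 = (\operatorname{tr}M)M - (\det M)I = (1+x+x^2)M - x^2I$ (since $\operatorname{tr}M = 1+x+x^2$ and $\det M = x^2(x+1) - x^3 = x^2$); this produces both recurrences simultaneously, at the price of also verifying $m=3$ for $B$ by hand.
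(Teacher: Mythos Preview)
Your argument is correct and follows essentially the same elimination strategy as the paper: the paper substitutes $B_{m-1}=xA_{m-1}+B_{m-2}$ and then $B_{m-2}=A_{m-1}-x^2A_{m-2}$ into $A_m=B_{m-1}+x^2A_{m-1}$, while you solve the first relation for $B$ and feed it into the second, but in both cases one is simply eliminating the other family from the coupled system and then checking $m=2$ for $B$ directly. Your Cayley--Hamilton remark is a pleasant bonus not present in the paper, but the core proof is the same.
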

\begin{proof}
	By Proposition~\ref{prop:rec-R},   for $m\ge 4$,
    \begin{align*}
    A_m(x) &=B_{m-1}(x)+x^2A_{m-1}(x)\\
    &=xA_{m-1}(x)+B_{m-2}(x)+x^2A_{m-1}(x)\\
    &=xA_{m-1}(x)+A_{m-1}(x)-x^2A_{m-2}(x)+x^2A_{m-1}(x)\\
    &=(1+x+x^2)A_{m-1}(x)-x^2A_{m-2}(x).
    \end{align*}
	For $B_2(x)$, the conclusion is also ture.
Samilarily, the recurrence relation of $B_m(x)$ can be obtained.
\end{proof}
Then, we can derive the generating functions of $A_m(x)$ and $B_m(x)$ by Proposition~\ref{prop:rec-AB}, respectively.
\begin{theorem}\label{th:gf-AB}
	The generating functions of $A_m(x)$ and $B_m(x)$ are
	\[
	\sum_{m\ge 0}A_m(x)z^m = z+ \frac{1-z+z^2}{1-(1+x+x^2)z+x^2z^2}
	\]
	and
	\[
	\sum_{m\ge0}B_m(x)z^m = \frac{1+x-(x+x^2)z}{1-(1+x+x^2)z+x^2z^2},
	\]
	respectively.
\end{theorem}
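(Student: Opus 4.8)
The plan is to solve the linear recurrences of Proposition~\ref{prop:rec-AB} by the standard generating-function method, handling $A_m$ and $B_m$ in parallel since they satisfy the same second-order recurrence with the same characteristic polynomial $1-(1+x+x^2)z+x^2z^2$ (treating $x$ as a parameter and $z$ as the formal variable). First I would record the initial data from the explicit list of $R_n(x)$: namely $A_0=R_0=1$, $A_1=R_2=1+x+x^2$, $A_2=R_4=1+x+x^2+2x^3+x^4$, $A_3=R_6$, and $B_0=R_1=1+x$, $B_1=R_3=1+x+x^2+x^3$, together with $B_2=R_5$. The recurrence for $B_m$ holds for all $m\ge 2$, so $\sum_{m\ge0}B_m(x)z^m$ is directly $\frac{N(z)}{1-(1+x+x^2)z+x^2z^2}$ where the numerator $N(z)$ is of degree at most $1$ in $z$ and is pinned down by matching the $z^0$ and $z^1$ coefficients against $B_0$ and $B_1$; this gives $N(z)=B_0+(B_1-(1+x+x^2)B_0)z=(1+x)-(x+x^2)z$, which is the claimed formula.

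For $A_m$ the recurrence only kicks in at $m\ge 4$, so I would write $\sum_{m\ge0}A_m(x)z^m = P(z) + \frac{Q(z)}{1-(1+x+x^2)z+x^2z^2}$, i.e. allow a polynomial correction $P(z)$ accounting for the low-index terms that do not yet obey the recursion. Multiplying the series by the denominator $1-(1+x+x^2)z+x^2z^2$ and demanding that all coefficients of $z^m$ with $m\ge 4$ vanish (which is exactly the recurrence) forces the product to be a polynomial of degree $\le 3$ in $z$; its four coefficients are computed from $A_0,A_1,A_2,A_3$. Carrying out this short computation should yield $(1-z+z^2) + z(1-(1+x+x^2)z+x^2z^2)$, whence $\sum A_m z^m = z + \frac{1-z+z^2}{1-(1+x+x^2)z+x^2z^2}$, matching the statement. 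As a sanity check one verifies the first few Taylor coefficients in $z$ reproduce $A_0,A_1,A_2,A_3$.

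The only mild obstacle is bookkeeping: one must be careful that the $B_m$ recurrence is valid already at $m=2$ (so no polynomial correction is needed there — the text explicitly notes "For $B_2(x)$, the conclusion is also true"), whereas the $A_m$ recurrence starts at $m=4$ (so the extra leading $z$ term genuinely appears), and one must use the correct initial polynomials $A_0,A_1,A_2,A_3$ and $B_0,B_1$ read off from the displayed list of $R_n(x)$. There is no real analytic difficulty: everything is formal manipulation in $\mathbb{Z}[x][[z]]$, and the equality of two rational functions in $z$ is checked by cross-multiplying and comparing the finitely many relevant coefficients, all of which are forced by the recurrences of Proposition~\ref{prop:rec-AB} plus the initial conditions.
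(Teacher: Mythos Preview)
Your proposal is correct and follows essentially the same route as the paper: both arguments feed the second-order recurrences of Proposition~\ref{prop:rec-AB} into the standard generating-function machinery, using the initial values $A_0,\dots,A_3$ and $B_0,B_1$ read off from the listed $R_n(x)$, and solve the resulting linear equation in $\sum A_m z^m$ (respectively $\sum B_m z^m$). Your remark that the $B$-recurrence already holds at $m=2$ (hence no polynomial correction) while the $A$-recurrence starts at $m=4$ (hence the extra $z$ term) is exactly the point the paper handles implicitly by splitting off the first four terms of the $A$-sum.
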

\begin{proof}
	By the Proposition~\ref{prop:rec-AB},
	\begin{align*}
	\sum_{m\ge 0}A_m(x)z^m &= \sum_{m\ge 4}A_m(x)z^m +A_3(x)z^3 + A_2(x)z^2 + A_1(x)y+A_0(x) \\
	  &= \sum_{m\ge 4}((1+x+x^2)A_{m-1}-x^2A_{m-2})z^m +A_3(x)z^3 + A_2(x)z^2 + A_1(x)z+A_0(x) \\
	  &= (1+x+x^2)z\sum_{m\ge 3} A_m(x)z^m - x^2z^2 \sum_{m\ge 2} A_m(x)z^m A_3(x)z^3+ A_2(x)z^2 + A_1(x)z+A_0(x) \\
	  &= (1+x+x^2)z\sum_{m\ge 0} A_m(x)z^m - x^2z^2 \sum_{m\ge 0} A_m(x)z^m + 1 +x^2z^3-xz^2-x^2z^2.
    \end{align*}
Similarly, the generating function of $B_m$ are obtained.
\end{proof}
In addition, $A_m(x)$ and $B_m(x)$ can be obtained by Theorem~\ref{th:gf-AB}.

Let $\binom{n;3}{k}$ denote the coefficient of $x^k$ in $(1+x+x^2)^n$(see \cite{bComte74}), which is
   \[
   \binom{n;3}{k} =\sum_{i=0}^{\lfloor k/2 \rfloor}\binom{n}{k-i}\binom{k-i}{i}.
   \]
   See also the sequence A027907 in the OEIS \cite{Sloan19}. Using Kronecker delta function $\delta$, we have the formula of the coefficient $r_{n,k}$.

\begin{theorem}\label{th:rnk}
     \begin{multline*}
	    r_{2m,k} = \delta_{1,m}\delta_{0,k}+ \sum_{i=0}^{\lfloor \frac m2 \rfloor}(-1)^i\binom{m-i}{i} \binom{m-2i;3}{k-2i} -\sum_{i=0}^{\lfloor \frac{m-1}{2} \rfloor}(-1)^i\binom{m-i-1}{i}\binom{m-2i-1;3}{k-2i}\\
 +\sum_{i=0}^{\lfloor \frac{m-2}{2} \rfloor}(-1)^i\binom{m-i-2}{i}\binom{m-2i-2;3}{k-2i}.
     \end{multline*}
and
 \begin{multline*}
   r_{2m+1,k} = \sum_{i=0}^{\lfloor\frac{m}{2}\rfloor} (-1)^i \binom{m-i}{i}\left(\binom{m-2i;3}{k-2i}+ \binom{m-2i;3}{k-2i-1}\right) \\
     -\sum_{i=0}^{\lfloor\frac{m-1}{2} \rfloor} (-1)^i \binom{m-i-1}{i}\left(\binom{m-2i-1;3}{k-2i-1} +\binom{m-2i-1;3}{k-2i-2}\right).
 \end{multline*}

\end{theorem}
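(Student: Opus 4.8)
The plan is to read the coefficients straight off the rational generating functions of Theorem~\ref{th:gf-AB} by expanding their common denominator $D(x,z):=1-(1+x+x^2)z+x^2z^2$ as a power series in $z$. Writing
\[
\frac{1}{D(x,z)}=\sum_{j\ge 0}z^{j}\bigl((1+x+x^2)-x^2z\bigr)^{j}
=\sum_{j\ge 0}z^{j}\sum_{i\ge 0}\binom{j}{i}(1+x+x^2)^{j-i}(-x^2z)^{i},
\]
and collecting the coefficient of $z^{m}$ (put $j=m-i$, so that $\binom{j}{i}=\binom{m-i}{i}$), I obtain the building block
\[
c_m(x):=[z^{m}]\,\frac{1}{D(x,z)}
=\sum_{i=0}^{\lfloor m/2\rfloor}(-1)^{i}\binom{m-i}{i}x^{2i}(1+x+x^2)^{m-2i},
\]
where the factor $\binom{m-i}{i}$ makes the range of $i$ self-correcting under the stated convention. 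Extracting the coefficient of $x^{k}$ and using $\binom{n;3}{k}=[x^{k}](1+x+x^2)^{n}$ yields
\[
[x^{k}]c_m(x)=\sum_{i=0}^{\lfloor m/2\rfloor}(-1)^{i}\binom{m-i}{i}\binom{m-2i;3}{k-2i}.
\]

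Next I would feed in the numerators. Since $\sum_{m\ge 0}A_m(x)z^{m}=z+(1-z+z^2)D(x,z)^{-1}$ and $A_m(x)=R_{2m}(x)$, comparing coefficients of $z^{m}$ gives $r_{2m,k}=\delta_{1,m}\delta_{0,k}+[x^{k}]c_m(x)-[x^{k}]c_{m-1}(x)+[x^{k}]c_{m-2}(x)$, which is precisely the first displayed formula once the three terms are written out using the expression for $[x^{k}]c_\bullet$ above (the three sums carrying ranges governed by $m$, $m-1$, $m-2$). Likewise $\sum_{m\ge 0}B_m(x)z^{m}=(1+x)D(x,z)^{-1}-(x+x^2)z\,D(x,z)^{-1}$ together with $B_m(x)=R_{2m+1}(x)$ gives $B_m(x)=(1+x)c_m(x)-(x+x^2)c_{m-1}(x)$; taking the coefficient of $x^{k}$ and splitting $1+x$ and $x+x^2$ into their monomials produces exactly the paired trinomial coefficients $\binom{m-2i;3}{k-2i}+\binom{m-2i;3}{k-2i-1}$ and $\binom{m-2i-1;3}{k-2i-1}+\binom{m-2i-1;3}{k-2i-2}$ appearing in the second formula.

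The argument uses only elementary power-series manipulation, so I expect no genuine obstacle beyond bookkeeping: correctly tracking the index shifts $m\mapsto m-1,m-2$ coming from the numerator $1-z+z^2$ and the shifts $k\mapsto k-1,k-2$ coming from $1+x$ and $x+x^2$, and checking that the convention $\binom{n}{k}=0$ whenever $0\le k\le n$ fails makes all summation limits consistent, so that no separate small-$m$ analysis is needed (the closed forms of Theorem~\ref{th:gf-AB} already encode the low-order terms, and the stray $z$ in the $A$-generating function contributes precisely the $\delta_{1,m}\delta_{0,k}$). As a final sanity check I would match both formulas against the listed polynomials $R_0,\dots,R_9$.
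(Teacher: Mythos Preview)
Your proposal is correct and follows essentially the same route as the paper: the paper defines $g_n(x)$ by $\sum_{n\ge 0}g_n(x)z^n=1/(1-(1+x+x^2)z+x^2z^2)$, derives exactly your formula $g_n(x)=\sum_i(-1)^i\binom{n-i}{i}x^{2i}(1+x+x^2)^{n-2i}$, and then reads off $r_{2m,k}$ and $r_{2m+1,k}$ from $A_m(x)=\delta_{m,1}+g_m(x)-g_{m-1}(x)+g_{m-2}(x)$ and $B_m(x)=(1+x)g_m(x)-(x+x^2)g_{m-1}(x)$, which is precisely your $c_m$ argument with different notation.
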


   \begin{proof}
     Consider the polynomials $g_n(x)$ defined by
     \[
     \sum_{n\ge 0}g_n(x)z^n =\frac{1}{1-(1+x+x^2)z+x^2z^2}
     \]
     such that
     \[
     g_n(x) =\sum_{i=0}^{\lfloor n/2\rfloor}(-1)^i \binom{n-i}{i}x^{2i}(1+x+x^2)^{n-2i}.
     \]
     In addition, the coefficient of $x^k$ in $g_n(x)$ can be given by
     \[
     [x^k]g_n(x) =\sum_{i=0}^{\lfloor n/2\rfloor}(-1)^i\binom{n-i}{i}x^{2i}(1+x+x^2)^{n-2i} =\sum_{i=0}^{\lfloor n/2\rfloor}(-1)^i\binom{n-i}{i}\binom{n-2i;3}{k-2i}.
     \]
     Then, since $A_m(x)=z +g_m(x)-g_{m-1}(x) +g_{m-2}(x)$, we have
     \begin{align*}
       r_{2m,k} &= [x^k]A_m(x)=[x^k]z +[x^k]g_m(x)-[x^k]g_{m-1}(x) +[x^k]g_{m-2}(x)\\
                &= \delta_{1,m}\delta_{0,k}+ \sum_{i=0}^{\lfloor \frac m2 \rfloor}(-1)^i\binom{m-i}{i} \binom{m-2i;3}{k-2i} -\sum_{i=0}^{\lfloor \frac{m-1}{2} \rfloor}(-1)^i\binom{m-i-1}{i}\binom{m-2i-1;3}{k-2i}\\
                 &~+\sum_{i=0}^{\lfloor \frac{m-2}{2} \rfloor}(-1)^i\binom{m-i-2}{i}\binom{m-2i-2;3}{k-2i}.
     \end{align*}
     Thus, $r_{2m+1,k}$ is obtained from $B_m(x)=g_m(x)+xg_m(x)-xg_{m-1}(x)-x^2g_{m-1}(x)$ in the same way.
   \end{proof}

We can obtain the generating function of $R_n(x)$ from Theorem~\ref{th:gf-AB}.

\begin{theorem}\label{th:rnx}
	The generating function of $R_n(x)$ is
	\[
	\sum_{n\ge 0}R_n(x)y^n = \frac{1+(1+x)y-(x+x^2)y^3-(x +x^2)y^4+x^2y^6}{1-(1+x+x^2)y^2+x^2y^4}.
	\]
\end{theorem}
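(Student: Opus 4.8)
The plan is to split the generating function in $y$ into its even and odd parts and feed in the closed forms from Theorem~\ref{th:gf-AB}. Since $A_m(x)=R_{2m}(x)$ and $B_m(x)=R_{2m+1}(x)$, we have
\[
\sum_{n\ge 0}R_n(x)y^n=\sum_{m\ge 0}R_{2m}(x)y^{2m}+\sum_{m\ge 0}R_{2m+1}(x)y^{2m+1}=\sum_{m\ge 0}A_m(x)y^{2m}+y\sum_{m\ge 0}B_m(x)y^{2m}.
\]
So the first step is simply to substitute $z=y^2$ in the two generating functions supplied by Theorem~\ref{th:gf-AB}, giving
\[
\sum_{n\ge 0}R_n(x)y^n=y^2+\frac{1-y^2+y^4}{1-(1+x+x^2)y^2+x^2y^4}+y\cdot\frac{1+x-(x+x^2)y^2}{1-(1+x+x^2)y^2+x^2y^4}.
\]

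The second step is to place everything over the common denominator $D(x,y)=1-(1+x+x^2)y^2+x^2y^4$. Writing $y^2=y^2D/D$ and expanding $y^2D=y^2-(1+x+x^2)y^4+x^2y^6$, the numerator becomes
\[
y^2-(1+x+x^2)y^4+x^2y^6+\bigl(1-y^2+y^4\bigr)+y\bigl(1+x-(x+x^2)y^2\bigr),
\]
and collecting terms by powers of $y$ (the $y^2$ terms cancel, and the $y^4$ coefficient collapses from $-(1+x+x^2)+1$ to $-(x+x^2)$) yields exactly
\[
1+(1+x)y-(x+x^2)y^3-(x+x^2)y^4+x^2y^6,
\]
which is the claimed numerator.

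There is essentially no obstacle here beyond bookkeeping: the content of the result is entirely contained in Theorem~\ref{th:gf-AB}, and the only thing to be careful about is the cancellation in the $y^2$ and $y^4$ coefficients when the ``extra'' $z$ term of $\sum A_m(x)z^m$ and the polynomial parts $1-z+z^2$ and $1+x-(x+x^2)z$ are combined with $y^2D$. I would also remark that specializing $x=1$ in the formula recovers the generating function $\sum_n 2F_n y^n$ for the vertex counts, as a sanity check consistent with the corollary that $|V(\Phi_n)|=2F_n$.
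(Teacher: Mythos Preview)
Your proof is correct and follows essentially the same route as the paper: split the series into even and odd parts, substitute $z=y^2$ into the two generating functions of Theorem~\ref{th:gf-AB}, and combine over the common denominator $1-(1+x+x^2)y^2+x^2y^4$. The only cosmetic difference is that the paper first absorbs the stray $z$ term of $\sum A_m(x)z^m$ into the fraction before adding, whereas you carry it separately and cancel at the end; your closing sanity-check remark about $x=1$ is fine in spirit but note that the identity $R_n(1)=2F_n$ only holds for $n\ge 3$, so the specialized generating function has small correction terms at $n=0,2$.
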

\begin{proof}
	By the definition of $A_m(x)$ and $B_m(x)$,
	\begin{align*}
	\sum_{n\ge0}R_n(x)y^n &= \sum_{m\ge 0}A_m(x)y^{2m} + \sum_{m\ge 0}B_m(x)y^{2m+1} \\
	&= \sum_{m\ge 0}A_m(x)y^{2m} + y\sum_{m\ge 0}B_m(x)y^{2m} \\
	&= \frac{1-xy^4-x^2y^4+x^2y^6}{1-(1+x+x^2)y^2+x^2y^4} + y\frac{1+x-xy^2-x^2y^2}{1-(1+x+x^2)y^2+x^2y^4} \\
	&= \frac{1+(1+x)y-(x+x^2)y^3-(x +x^2)y^4+x^2y^6} {1-(1+x+x^2)y^2+x^2y^4}.
	\end{align*}
	
\end{proof}

   Since $R_n(1)$ is the number of vertices of $\Phi_{n}$, put $x=1$ in the generating function of $R_n(x)$ we obtain the generating function of the number of vertices of $\Phi_{n}$ is
  \[
  1+y^2 + \frac{2y}{1-y-y^2}.
  \]

  Thus we have the following results related to Fibonacci sequences:

  \begin{corollary}\label{cor:fibs} Using the above notation, we have 
  	$$  \sum_{k=0}^{n} r_{n,k} = 2F_n \;\; (n \geq 3).  $$
  \end{corollary}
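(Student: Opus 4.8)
The plan is to evaluate everything at $x=1$ and reduce to the Fibonacci recurrence. First I would record the obvious observation that $R_n(1)=\sum_{k=0}^{n}r_{n,k}$ is exactly the number of vertices of $\Phi_n$, so the assertion is equivalent to the statement $R_n(1)=2F_n$ for $n\ge 3$, and this is the form I would actually prove.

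The most direct route uses Theorem~\ref{th:rnx}. Substituting $x=1$ into the generating function there and simplifying yields, as already noted in the text,
\[
\sum_{n\ge 0}R_n(1)y^n = 1+y^2+\frac{2y}{1-y-y^2}.
\]
Since $\sum_{n\ge 0}F_ny^n=\frac{y}{1-y-y^2}$, comparing the coefficient of $y^n$ on both sides gives $R_n(1)=2F_n$ for every $n$ except $n=0$ and $n=2$, where the polynomial correction $1+y^2$ contributes; in particular $R_n(1)=2F_n$ holds for all $n\ge 3$, which is the claim.

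Alternatively — and perhaps more transparently — I would set $v_n:=R_n(1)$ and put $x=1$ in Proposition~\ref{prop:rec-R}: both branches of that recurrence collapse to $v_n=v_{n-1}+v_{n-2}$ for $n\ge 5$, so $v_n$ satisfies the Fibonacci recurrence. It then suffices to check the two base values $v_3=4=2F_3$ and $v_4=6=2F_4$ from the displayed list of $R_n(x)$, observe that $2F_n$ obeys the same recurrence for $n\ge 3$, and conclude by a one-line induction.

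There is essentially no obstacle here. The only points requiring care are the range of validity — the identity genuinely fails at $n=0$ and $n=2$ because of the $1+y^2$ term, which is precisely why the statement is restricted to $n\ge 3$ — and pinning down the base cases of the induction correctly; everything else is routine coefficient extraction.
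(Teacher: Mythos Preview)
Your proposal is correct and your primary argument is exactly the paper's: substitute $x=1$ in Theorem~\ref{th:rnx} to obtain $\sum_{n\ge 0}R_n(1)y^n = 1+y^2+\dfrac{2y}{1-y-y^2}$ and read off $R_n(1)=2F_n$ for $n\ge 3$ from the Fibonacci generating function. Your additional inductive alternative via Proposition~\ref{prop:rec-R} (specialised to $x=1$) is a clean self-contained variant not spelled out in the paper, but it is not needed.
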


\subsection{Cube polynomials}\label{subsec:cube}
  The cube polynomials of $\Phi_n$ is $Q_n(x) = \sum_{k\ge 0}q_{n,k}x^k$, where $q_{n,k} := q_k(\Phi_n)$ is the number of the $k$-dimensional induced hypercubes of $\Phi_{n}$. The first few of $Q_n(x)$ are listed.

  \begin{align*}
  Q_0(x) &= 1 \\
  Q_1(x) &= 2+x \\
  Q_2(x) &= 3+2x \\
  Q_3(x) &= 4+3x \\
  Q_4(x) &= 6+6x+x^2 \\
  Q_5(x) &= 10+13x+4x^2 \\
  Q_6(x) &= 16+25x+11x^2+x^3 \\
  Q_7(x) &= 26+48x+28x^2+5x^3.
\end{align*}

\begin{lemma}[\cite{aWangZY18}]\label{lem:enum}
  Let $L$ be a finite distributive lattice and $K$ a cutting of $L$. Then
  \[
  q_k(L\boxplus K) = q_k(L) + q_k(K) + q_{k-1}(K).
  \]
\end{lemma}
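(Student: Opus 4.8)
## Proof proposal

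The plan is to argue at the level of the lattice, using the standard translation that for a finite distributive lattice $N$ the coefficient $q_k(N)$ is the number of intervals of $N$ isomorphic to the Boolean lattice $\mathbf{2}^k$ (an induced $k$-dimensional cube of the Hasse diagram is exactly such a Boolean interval, by distributivity). Put $M=L\boxplus K$. From the definition of the convex expansion (see Figure~\ref{fig:lplk}) I would first record the structure of $M$: its underlying set is $L\cup K'$ with $K'=\{x':x\in K\}$ a clone of $K$; each $x\in K$ covers its clone $x'$ (the \emph{matching covers}); $M$ restricted to $L$ carries the order of $L$ and $M$ restricted to $K'$ carries the order of $K$; and contracting the matching covers defines a surjective lattice homomorphism $\pi\colon M\to L$ that is the identity on $L$ and sends $x'\mapsto x$. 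Since $K$ is a cutting we may take $\hat1_K=\hat1_L$ (the case $\hat0_K=\hat0_L$ being dual), which guarantees that an interval of $L$ whose top lies outside $K$ misses $K$ entirely.

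I would then sort the Boolean $\mathbf{2}^k$-intervals $[p,q]$ of $M$ by whether or not they contain a matching cover. If $[p,q]$ contains none, then $\pi$ is injective on it, so $\pi|_{[p,q]}$ is a bijective lattice homomorphism onto the interval $[\pi(p),\pi(q)]$ of $L$, which is therefore again a Boolean $\mathbf{2}^k$-interval, now of $L$. Conversely, a given interval $I\cong\mathbf{2}^k$ of $L$ has as matching-cover-free $\pi$-preimages: the set $(I\setminus K)\cup\{x':x\in I\cap K\}$ always (one checks this is an interval of $M$ isomorphic to $\mathbf{2}^k$; here $\hat1_K=\hat1_L$ is used to see no further element of $M$ lies inside it), and, when $I\subseteq K$, also $I$ itself viewed inside $K\subseteq L\subseteq M$. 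As the Boolean $\mathbf{2}^k$-intervals of $L$ contained in $K$ correspond exactly to those of $K$, the matching-cover-free Boolean $\mathbf{2}^k$-intervals of $M$ number $q_k(L)+q_k(K)$.

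If on the other hand $[p,q]\cong\mathbf{2}^k$ contains a matching cover, then, splitting $\mathbf{2}^k$ along that cover as $\mathbf{2}\times\mathbf{2}^{k-1}$, one shows $[p,q]=[x',y]_M$ for a uniquely determined pair $x\le y$ in $K$ with $[x,y]_K\cong\mathbf{2}^{k-1}$, and that $[x',y]_M\cong[x,y]_K\times\mathbf{2}\cong\mathbf{2}^k$ (the "doubling" of $[x,y]_K$ along its bottom). The assignment $[x,y]_K\mapsto[x',y]_M$ is a bijection from the Boolean $\mathbf{2}^{k-1}$-intervals of $K$ onto these, so there are $q_{k-1}(K)$ of them. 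Adding the two counts gives $q_k(M)=q_k(L)+q_k(K)+q_{k-1}(K)$, and the cases $k=0$ (with $q_{-1}\equiv0$) and $k=1$ are immediate checks.

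The delicate part is the bookkeeping in the matching-cover-free case: one must pin down precisely which interval of $M$ pulls back from a given $I\cong\mathbf{2}^k$ of $L$ — in particular that an interval of $L$ "straddling" $K$ does not pull back to a Boolean interval of $M$ but is recovered (exactly once) through its clone in $K'$, which is where $\hat1_K=\hat1_L$ and the distributivity of $M$ genuinely enter — and, in the matching-cover case, that such a Boolean interval really has its minimum in $K'$ and is a doubling of an interval of $K$. Everything else is routine verification of the order relations of the convex expansion.
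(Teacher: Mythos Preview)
The paper does not prove this lemma---it is imported from \cite{aWangZY18}---so there is no in-paper proof to compare against. Your two-case split (Boolean intervals of $M=L\boxplus K$ that avoid every matching cover versus those that contain one) is the right strategy, and the bijection you set up in the second case with the $(k-1)$-dimensional Boolean intervals of $K$ is correct and yields the term $q_{k-1}(K)$.

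The matching-cover-free case, however, has a genuine error. In the convex (Day) expansion both $L$ and $K'$ are \emph{convex} sublattices of $M$; in particular every interval $[a,b]_L$ is literally equal to $[a,b]_M$. Your asserted ``canonical lift'' $(I\setminus K)\cup\{x':x\in I\cap K\}$ need not be an interval of $M$ at all when $I$ straddles $K$. Take $L$ the three-chain $0<a<1$ with $K=\{a,1\}$ (so $\hat1_K=\hat1_L$) and $I=\{0,a\}$: your lift is $\{0,a'\}$, but in $M$ the element $a$ lies strictly between $0$ and $a'$, so $\{0,a'\}$ is not convex and hence not an interval. Your later remark that a straddling $I$ ``does not pull back to a Boolean interval of $M$'' is therefore backwards: $I$ itself, read inside the convex sublattice $L\subseteq M$, \emph{is} its unique matching-cover-free Boolean preimage under $\pi$. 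The final count $q_k(L)+q_k(K)$ happens to survive---one preimage for every Boolean $k$-interval of $L$, plus an extra clone in $K'$ exactly when $I\subseteq K$---but the bijection must be corrected. Simpler still, drop $\pi$ entirely: a matching-cover-free hypercube is connected and meets no edge between $L$ and $K'$, hence lies wholly in one of the convex sublattices $L$ or $K'$, giving $q_k(L)+q_k(K)$ at once.
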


  We can get the recurrence relation of $q_{n,k}$ from Lemma~\ref{lem:enum} evidently.
\begin{proposition}\label{prop:rec-q}
  For $n\ge 4$,
  \[
  q_{n,k} = q_{n-1,k}+q_{n-2,k}+q_{n-2,k-1}.
  \]

\end{proposition}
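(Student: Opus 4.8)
The plan is to invoke Lemma~\ref{lem:struc}, which gives $\Phi_n \cong \Phi_{n-1} \boxplus \Phi_{n-2}$ for $n \ge 6$, together with Lemma~\ref{lem:enum} applied with $L = \Phi_{n-1}$ and $K = \Phi_{n-2}$ (here $\Phi_{n-2}$ is realized as a cutting of $\Phi_{n-1}$ inside the filter lattice, by the convex-expansion description in Lemma~\ref{lem:conexp} with $x = x_n$). Lemma~\ref{lem:enum} states $q_k(L \boxplus K) = q_k(L) + q_k(K) + q_{k-1}(K)$; substituting the identifications of $L$ and $K$ yields exactly $q_{n,k} = q_{n-1,k} + q_{n-2,k} + q_{n-2,k-1}$. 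So the bulk of the argument is a direct translation of Lemma~\ref{lem:enum} through the structural isomorphism.

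The one genuine point to check is the range of validity. Lemma~\ref{lem:struc} as stated only guarantees $\Phi_n \cong \Phi_{n-1} \boxplus \Phi_{n-2}$ for $n \ge 6$, whereas the Proposition claims the recurrence for all $n \ge 4$. I would close the gap at $n = 4$ and $n = 5$ by using the other form of Lemma~\ref{lem:struc}, namely $\Phi_n \cong \Gamma_{n-1}^* \boxplus \Gamma_{n-4}^*$ for $n \ge 5$, or more simply by reading off $q_{n,k}$ for small $n$ from the explicit list of $Q_0(x), \dots, Q_7(x)$ given just above the Proposition and verifying $q_{4,k} = q_{3,k} + q_{2,k} + q_{2,k-1}$ and $q_{5,k} = q_{4,k} + q_{3,k} + q_{3,k-1}$ coefficientwise. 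For instance, from the list $Q_4(x) = 6 + 6x + x^2$, $Q_3(x) = 4 + 3x$, $Q_2(x) = 3 + 2x$: the constant term $6 = 4 + 3 + (-1)$? — no, $q_{2,-1} = 0$, so $6 = 4 + 3 + 0$ fails, which signals that one must instead use $\Phi_4 \cong \Gamma_3^* \boxplus \Gamma_0^*$ and be careful. This tells me the cleanest route is: prove the recurrence for $n \ge 6$ from $\Phi_n \cong \Phi_{n-1} \boxplus \Phi_{n-2}$, and handle $n = 4, 5$ separately via $\Phi_n \cong \Gamma_{n-1}^* \boxplus \Gamma_{n-4}^*$ and Lemma~\ref{lem:enum} (now with $L = \Gamma_{n-1}^*$, $K = \Gamma_{n-4}^*$, together with the known cube polynomials of Fibonacci cubes).

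The main obstacle, then, is not any deep computation but simply bookkeeping the small cases and making sure the cutting used in each application of Lemma~\ref{lem:enum} is the correct convex sublattice coming from Lemma~\ref{lem:conexp}. Since cube polynomials are invariant under graph isomorphism and $q_k$ is preserved by the convex-expansion decomposition in precisely the form recorded in Lemma~\ref{lem:enum}, once the structural isomorphism is in hand the identity is immediate. I would write the proof in two or three sentences for $n \ge 6$ and append a one-line check (or a short appeal to the Fibonacci-cube case) for $n = 4, 5$.
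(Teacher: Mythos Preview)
Your approach is exactly the paper's: the paper simply says the recurrence follows ``evidently'' from Lemma~\ref{lem:enum}, implicitly via the decomposition $\Phi_n\cong\Phi_{n-1}\boxplus\Phi_{n-2}$ of Lemma~\ref{lem:struc}. For $n\ge 5$ this is fine (in fact one checks directly from Lemma~\ref{lem:conexp} with $x=x_n$ that $\Phi_5\cong\Phi_4\boxplus\Phi_3$ already, so the bound $n\ge 6$ in Lemma~\ref{lem:struc} is not sharp for the first isomorphism).

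Where your proposal goes astray is at $n=4$. You correctly compute that $q_{4,0}=6$ while $q_{3,0}+q_{2,0}+q_{2,-1}=4+3+0=7$, so the stated recurrence \emph{is false at $n=4$}; this is a misprint in the Proposition, and indeed Proposition~\ref{prop:rec-Q} immediately afterward is stated only for $n\ge 5$. Your plan to repair $n=4$ via $\Phi_4\cong\Gamma_3^*\boxplus\Gamma_0^*$ and Lemma~\ref{lem:enum} cannot succeed: that decomposition yields $q_k(\Phi_4)=q_k(\Gamma_3)+q_k(\Gamma_0)+q_{k-1}(\Gamma_0)$, which is a (true) identity involving Fibonacci-cube counts, not the claimed relation $q_{4,k}=q_{3,k}+q_{2,k}+q_{2,k-1}$ among the $\Phi$-counts. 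No decomposition can prove the $n=4$ case because the equality simply does not hold there. The correct resolution is to amend the hypothesis to $n\ge 5$ and then your one-line argument from Lemmas~\ref{lem:struc} and~\ref{lem:enum} is complete.
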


It is easy to get the recurrence relation of $Q_n(x)$.
\begin{proposition}\label{prop:rec-Q}
	For $n\ge 5$,
	\[
	Q_n(x) = Q_{n-1}(x) + (1+x)Q_{n-2}(x).
	\]
\end{proposition}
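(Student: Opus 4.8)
The plan is to deduce the polynomial identity directly from the coefficientwise recurrence in Proposition~\ref{prop:rec-q} by passing to generating functions in the variable $x$. Concretely, fix $n\ge 5$, multiply the identity $q_{n,k}=q_{n-1,k}+q_{n-2,k}+q_{n-2,k-1}$ by $x^k$, and sum over all $k\ge 0$.

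First I would record that $\sum_{k\ge 0}q_{n,k}x^k=Q_n(x)$ by definition of the cube polynomial, and likewise $\sum_{k\ge 0}q_{n-1,k}x^k=Q_{n-1}(x)$ and $\sum_{k\ge 0}q_{n-2,k}x^k=Q_{n-2}(x)$. The only term requiring attention is the shifted one: reindexing gives $\sum_{k\ge 0}q_{n-2,k-1}x^k=x\sum_{k\ge 1}q_{n-2,k-1}x^{k-1}=x\sum_{j\ge 0}q_{n-2,j}x^{j}=xQ_{n-2}(x)$, where the $k=0$ contribution vanishes because $q_{n-2,-1}=0$ (there are no hypercubes of negative dimension, consistent with the convention that $\binom{n}{k}=0$ outside $0\le k\le n$). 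Combining the pieces yields $Q_n(x)=Q_{n-1}(x)+Q_{n-2}(x)+xQ_{n-2}(x)=Q_{n-1}(x)+(1+x)Q_{n-2}(x)$, which is the assertion.

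I do not expect a genuine obstacle here; the argument is a one-line transfer of Proposition~\ref{prop:rec-q}, so the resulting identity inherits exactly the range $n\ge 5$ in which that coefficient recurrence (coming from the convex-expansion decomposition $\Phi_n\cong\Phi_{n-1}\boxplus\Phi_{n-2}$ together with Lemma~\ref{lem:enum}) is valid. The only point worth a sanity check is that the range is not larger: a quick cross-check against the listed data, e.g.\ $Q_4(x)+(1+x)Q_3(x)=(6+6x+x^2)+(1+x)(4+3x)=10+13x+4x^2=Q_5(x)$ while the analogous expression at $n=4$ gives $Q_3(x)+(1+x)Q_2(x)=7+8x+2x^2\neq Q_4(x)$, confirms both that the formula is correct and why it is stated only for $n\ge 5$.
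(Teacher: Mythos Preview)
Your proof is correct and is essentially identical to the paper's own argument: the paper also multiplies the coefficient recurrence of Proposition~\ref{prop:rec-q} by $x^k$, sums over $k\ge 0$, and reindexes the shifted term to obtain $xQ_{n-2}(x)$. Your additional sanity check on the range $n\ge 5$ is a welcome extra, since Proposition~\ref{prop:rec-q} is stated in the paper for $n\ge 4$ even though (as your computation at $n=4$ shows) it too only holds from $n\ge 5$ onward.
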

\begin{proof}
By the Proposition~\ref{prop:rec-q},
\begin{align*}
  Q_n(x) &= \sum_{k\ge 0}q_{n,k}x^k = \sum_{k\ge 0}(q_{n-1,k} + q_{n-2,k} + q_{n-2,k-1})x^k \\
  &= \sum_{k\ge 0}q_{n-1,k}x^k + \sum_{k\ge 0}q_{n-2,k}x^k + x\sum_{k\ge 0}q_{n-2,k-1}x^{k-1} \\
  &= Q_{n-1}(x) + (1+x)Q_{n-2}(x).
\end{align*}

\end{proof}
\begin{theorem}\label{th:gf-Q}
  The generating function of $Q_n(x)$ is
  \[
  \sum_{n\ge 0}Q_n(x)y^n =\frac{1+(1+x)y-(1+x)^2y^3-(1+x)^2y^4}{1-y-(1+x)y^2}.
  \]
\end{theorem}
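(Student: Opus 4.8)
The plan is the routine generating-function argument driven by the recurrence of Proposition~\ref{prop:rec-Q}. Set $G(y)=\sum_{n\ge 0}Q_n(x)y^n$. Since the identity $Q_n(x)=Q_{n-1}(x)+(1+x)Q_{n-2}(x)$ is only guaranteed for $n\ge 5$, I would peel off the first five polynomials $Q_0(x),\dots,Q_4(x)$ before summing. Multiplying the recurrence by $y^n$ and summing over $n\ge 5$ gives
\[
G(y)-\sum_{j=0}^{4}Q_j(x)y^j
= y\Bigl(G(y)-\sum_{j=0}^{3}Q_j(x)y^j\Bigr)
+(1+x)y^2\Bigl(G(y)-\sum_{j=0}^{2}Q_j(x)y^j\Bigr),
\]
which I would rearrange into $\bigl(1-y-(1+x)y^2\bigr)G(y)=N(y)$, where $N(y)$ is a polynomial in $y$ of degree at most $4$: although the right-hand side involves $Q_4(x)y^4$, $-Q_3(x)y^4$ and $-(1+x)Q_2(x)y^4$, there is no $y^5$-term, so nothing of higher degree survives.

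Next I would substitute the tabulated initial values $Q_0(x)=1$, $Q_1(x)=2+x$, $Q_2(x)=3+2x$, $Q_3(x)=4+3x$, $Q_4(x)=6+6x+x^2$ and collect $N(y)$ term by term. The coefficient of $y^0$ is $Q_0(x)=1$; of $y^1$ it is $Q_1(x)-Q_0(x)=1+x$; of $y^2$ it is $Q_2(x)-Q_1(x)-(1+x)Q_0(x)=0$; of $y^3$ it is $Q_3(x)-Q_2(x)-(1+x)Q_1(x)=-(1+x)^2$; and of $y^4$ it is $Q_4(x)-Q_3(x)-(1+x)Q_2(x)=-(1+x)^2$. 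Hence
\[
N(y)=1+(1+x)y-(1+x)^2y^3-(1+x)^2y^4,
\]
and dividing by $1-y-(1+x)y^2$ yields the asserted formula.

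There is no genuine obstacle here; the only points requiring care are bookkeeping ones. First, because the recurrence in Proposition~\ref{prop:rec-Q} starts at $n=5$, one must strip off five initial polynomials rather than two (note that $Q_2(x)=Q_1(x)+(1+x)Q_0(x)$ holds already, while $Q_3(x)$ and $Q_4(x)$ do \emph{not} satisfy the recurrence, which is exactly why the $y^3$- and $y^4$-coefficients of $N(y)$ are nonzero). Second, one should verify the $y^2$-coefficient of $N(y)$ indeed vanishes, as recorded above. As a consistency check, expanding $G(y)$ as a power series should reproduce $Q_5(x)=10+13x+4x^2$ and $Q_6(x)=16+25x+11x^2+x^3$ from the list preceding Lemma~\ref{lem:enum}.
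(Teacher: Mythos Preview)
Your argument is correct and is exactly the routine the paper has in mind: the paper does not spell out a proof for this theorem but remarks that all its generating-function results are proved in the same way as Theorem~\ref{th:gf-AB}, i.e.\ by summing the recurrence against $y^n$ and solving for the series. Your bookkeeping (peeling off $Q_0,\dots,Q_4$ because the recurrence only starts at $n=5$, and checking the $y^2,y^3,y^4$ coefficients) is precisely what is needed.
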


\begin{proposition}\label{prop:RF-q}
	For $n \ge 0$,
	\[
	Q_n(x)=\sum_{j= 0}^{\lfloor \frac{n+1}{2} \rfloor}\binom{n-j+1}{j}(1+x)^j -
 \sum_{j= 2}^{\lfloor \frac{n+1}{2} \rfloor}\binom{n-j-1}{j-2}(1+x)^j - \sum_{j= 2}^{\lfloor \frac{n}{2} \rfloor}\binom{n-j-2}{j-2}(1+x)^j.
	\]
	and thus
	\[
	q_{n,k}=\sum_{j= 0}^{\lfloor \frac{n+1}{2} \rfloor}\binom{n-j+1}{j}\binom jk -
 \sum_{j= 2}^{\lfloor \frac{n+1}{2} \rfloor}\binom{n-j-1}{j-2}\binom jk - \sum_{j= 2}^{\lfloor \frac{n}{2} \rfloor}\binom{n-j-2}{j-2}\binom jk.
	\]
\end{proposition}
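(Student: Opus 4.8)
The plan is to read the formula straight off the generating function in Theorem~\ref{th:gf-Q} by expanding it as a product. Write $t = 1+x$, so that
\[
\sum_{n\ge 0}Q_n(x)y^n = (1 + ty - t^2y^3 - t^2y^4)\cdot\frac{1}{1-y-ty^2}.
\]
First I would record the (standard, and in the spirit of the Fibonacci identity recalled in Section~3) expansion
\[
\frac{1}{1-y-ty^2} = \sum_{n\ge 0}c_n\,y^n, \qquad c_n = \sum_{j=0}^{\lfloor n/2\rfloor}\binom{n-j}{j}t^j,
\]
which follows immediately by checking that both sides satisfy $c_n = c_{n-1} + t\,c_{n-2}$ with $c_0 = c_1 = 1$ (Pascal's rule handles the right-hand side; set $c_m = 0$ for $m<0$). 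Multiplying out and comparing coefficients of $y^n$ then gives, for every $n\ge 0$,
\[
Q_n(x) = c_n + t\,c_{n-1} - t^2 c_{n-3} - t^2 c_{n-4}.
\]

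Next I would simplify the four pieces. For the first two terms, Pascal's rule $\binom{n-j}{j} + \binom{n-j}{j-1} = \binom{n-j+1}{j}$ gives $c_n + t\,c_{n-1} = \sum_{j\ge 0}\binom{n-j+1}{j}t^j$. For the last two, reindexing $j \mapsto j-2$ gives $t^2 c_{n-3} = \sum_{j\ge 2}\binom{n-j-1}{j-2}t^j$ and $t^2 c_{n-4} = \sum_{j\ge 2}\binom{n-j-2}{j-2}t^j$. Collecting these three sums, and using the convention $\binom{n}{k}=0$ when $0\le k\le n$ fails (which is exactly what fixes the upper limits at $\lfloor (n+1)/2\rfloor$, $\lfloor (n+1)/2\rfloor$, $\lfloor n/2\rfloor$ respectively), yields the stated closed form for $Q_n(x)$ with $t = 1+x$. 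Finally, since $[x^k](1+x)^j = \binom jk$, extracting the coefficient of $x^k$ from each term term-by-term produces the companion formula for $q_{n,k}$.

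There is essentially no hard step: this is a routine coefficient extraction from a rational generating function. The only points requiring care are bookkeeping ones — getting the two index shifts in $t^2c_{n-3}$ and $t^2c_{n-4}$ right and matching all three summation ranges to the zero-convention — together with the observation that the product expansion is valid for all $n \ge 0$, not merely in the range $n\ge 5$ where the recurrence $Q_n = Q_{n-1} + (1+x)Q_{n-2}$ of Proposition~\ref{prop:rec-Q} applies; this is automatic because the numerator of the generating function already encodes the correct initial values, and a quick check of $n = 0,1,\dots,4$ against the displayed list of $Q_n(x)$ confirms it.
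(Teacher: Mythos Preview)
Your proposal is correct and matches the paper's approach: the paper does not give an explicit proof of this proposition but states that it is obtained just as Proposition~\ref{prop:gf-H}, i.e.\ by expanding the generating function of Theorem~\ref{th:gf-Q} and extracting coefficients, which is precisely what you do. The only additional ingredient you use, the Pascal-rule combination $c_n + (1+x)c_{n-1} = \sum_j \binom{n-j+1}{j}(1+x)^j$, is exactly what is needed to arrive at the first of the three sums in the stated form, so there is no meaningful difference.
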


\begin{corollary}
    For $n\ge 3$, the number of vertices of $\Phi_n$ is
     \[
     q_{n,0}= 2\sum_{k=0}^{\lfloor \frac{n-1}{2} \rfloor} \binom{n-1-k}{k} = 2F_n.
     \]
\end{corollary}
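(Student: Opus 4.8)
The plan is to read off $q_{n,0}$ from Proposition~\ref{prop:RF-q} and then collapse each of the three binomial sums into a Fibonacci number using the identity $\sum_{k=0}^{\lfloor m/2\rfloor}\binom{m-k}{k}=F_{m+1}$ quoted at the start of Section~3, together with the convention $\binom{n}{k}=0$ outside $0\le k\le n$, which makes the upper summation limits harmless.

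First I would set $k=0$ in the formula of Proposition~\ref{prop:RF-q}. Since $\binom j0=1$ for every $j\ge 0$, this gives
\[
q_{n,0}=\sum_{j=0}^{\lfloor (n+1)/2\rfloor}\binom{n-j+1}{j}
      -\sum_{j=2}^{\lfloor (n+1)/2\rfloor}\binom{n-j-1}{j-2}
      -\sum_{j=2}^{\lfloor n/2\rfloor}\binom{n-j-2}{j-2}.
\]
Substituting $i=j-2$ in the last two sums turns their general terms into $\binom{(n-3)-i}{i}$ and $\binom{(n-4)-i}{i}$, and for $n\ge 3$ all of the Fibonacci indices that appear are non-negative, so the cited identity applies term by term with $m=n+1$, $m=n-3$ and $m=n-4$. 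Hence
\[
q_{n,0}=F_{n+2}-F_{n-2}-F_{n-3}.
\]

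To finish I would simplify with the Fibonacci recurrence: $F_{n-2}+F_{n-3}=F_{n-1}$ and $F_{n+2}=F_{n+1}+F_n=2F_n+F_{n-1}$, whence $q_{n,0}=F_{n+2}-F_{n-1}=2F_n$. On the other hand, the same identity with $m=n-1$ gives $2\sum_{k=0}^{\lfloor(n-1)/2\rfloor}\binom{n-1-k}{k}=2F_n$, so all three quantities in the statement agree, and $q_{n,0}=q_0(\Phi_n)$ is by definition the number of vertices of $\Phi_n$. I do not expect any real difficulty here; the only point requiring attention is the bookkeeping of the floor functions and the edge case $n=3$, where the third sum is empty and equals $F_0=0$ — this is precisely where the hypothesis $n\ge 3$ is used, ensuring that $F_{n-2}+F_{n-3}=F_{n-1}$ and that the Fibonacci–binomial identities are applied only with non-negative indices.
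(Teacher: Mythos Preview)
Your proof is correct and is exactly the computation the paper implicitly expects: it states the corollary immediately after Proposition~\ref{prop:RF-q} without giving an explicit argument, relying on the reader to substitute $k=0$, apply the binomial--Fibonacci identity $\sum_{k}\binom{m-k}{k}=F_{m+1}$ from the start of Section~3, and simplify $F_{n+2}-F_{n-2}-F_{n-3}$ via the Fibonacci recurrence. Your handling of the index shifts, the floor bookkeeping, and the edge case $n=3$ is accurate.
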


\subsection{Maximal cube polynomials}\label{subsec:maxc}

 The maximal cube polynomial of $\Phi_n$ is $H_n(x) = \sum_{k \ge 0} h_{n,k} x^k$, where $h_{n,k} := h_k(\Phi_n)$ be the number of the maximal $k$-dimensional cubes in $\Phi_n$, The first few of $H_n(x)$ are listed.

 \begin{align*}
    H_0(x) &= 1 \\
	H_1(x) &= x \\
    H_2(x) &= 2x \\
	H_3(x) &= 3x \\
	H_4(x) &= 2x+x^2 \\
	H_5(x) &= 4x^2 \\
	H_6(x) &= 5x^2+x^3 \\
    H_7(x) &= 2x^2+5x^3\\
\end{align*}

 We can get the recurrence relation of $h_{n,k}$ from Lemma~\ref{lem:struc}.

\begin{proposition}\label{prop:rec-h}
	For $n \ge 6$,
	\[
	h_{n,k} = h_{n-2,k-1} + h_{n-3,k-1}.
	\]
\end{proposition}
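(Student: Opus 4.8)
The plan is to read the recurrence off the recursive decomposition of $\Phi_n$ in Lemma~\ref{lem:struc}, in the same spirit in which Proposition~\ref{prop:rec-q} followed from Lemma~\ref{lem:enum}; the difference is that here one needs the \emph{maximal}-cube analogue of Lemma~\ref{lem:enum}, i.e.\ a description of the maximal induced cubes of a convex expansion $L\boxplus K$ in terms of those of $L$ and of $K$. So the first task is to establish such a lemma. The mechanism is \emph{doubling along the cutting}: inside $L\boxplus K=L\cup K'$, if $C$ is a cube maximal in $K$, then $C$ together with the matching copy-vertices of $K'$ spans an induced $(\dim C+1)$-cube, and this cube is maximal in $L\boxplus K$; on the other hand a cube maximal in $L$ stays maximal in $L\boxplus K$ unless it becomes a proper face of one of these doubled cubes, in which case it is \emph{absorbed}. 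After checking that these two families are disjoint and exhaust all maximal cubes of $L\boxplus K$, one gets $H(L\boxplus K,x)=x\,H(K,x)+H(L,x)-A(x)$, where $A(x)$ counts the absorbed maximal cubes of $L$. Proving this lemma is the step I expect to be the main obstacle: it requires an induced-subgraph argument built on the cover relations defining $L\boxplus K$ — in particular that a copy-vertex $y'\in K'$ is adjacent only to $y$, to the lower covers of $y$, and to the copies $w'$ with $w$ adjacent to $y$ in $K$ — and, crucially, ruling out that a maximal cube of $L$ could be genuinely \emph{enlarged} (not merely embedded) in $L\boxplus K$.

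Granting the lemma, I would apply it to $\Phi_n\cong\Phi_{n-1}\boxplus\Phi_{n-2}$ from Lemma~\ref{lem:struc}: the cubes doubled from the cutting $\Phi_{n-2}$ contribute $x\,H_{n-2}(x)$, which is the term $h_{n-2,k-1}$. It then remains to show that the maximal cubes of $\Phi_{n-1}$ which survive in $\Phi_n$ contribute exactly $x\,H_{n-3}(x)$. For this I would unfold once more, $\Phi_{n-1}\cong\Phi_{n-2}\boxplus\Phi_{n-3}$, and run an induction: the maximal cubes of $\Phi_{n-1}$ split into those doubled from its own cutting $\Phi_{n-3}$ — these remain maximal after the outer expansion, giving $x\,H_{n-3}(x)$ — and those inherited faithfully from $\Phi_{n-2}$, which turn out to be precisely the ones absorbed by the outer expansion along $\Phi_{n-2}$; so the net surviving contribution is $x\,H_{n-3}(x)$. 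Combining, $H_n(x)=x\,H_{n-2}(x)+x\,H_{n-3}(x)$, i.e.\ $h_{n,k}=h_{n-2,k-1}+h_{n-3,k-1}$; the hypothesis $n\ge6$ is what keeps the cutting identifications in Lemma~\ref{lem:struc} valid throughout. (The decomposition $\Phi_n\cong(\Phi_{n-2}\boxplus\Phi_{n-2})\boxplus\Phi_{n-3}$ gives the two terms more directly, once one knows that the full doubling $\Phi_{n-2}\boxplus\Phi_{n-2}$ contributes $x\,H_{n-2}(x)$ and that the remaining expansion along $\Phi_{n-3}$ absorbs nothing.)

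As a consistency check I would confirm the identity on the tabulated values, e.g.\ $H_6(x)=x\,H_4(x)+x\,H_3(x)=x(2x+x^2)+x(3x)=5x^2+x^3$ and $H_7(x)=x\,H_5(x)+x\,H_4(x)=4x^3+(2x^2+x^3)=2x^2+5x^3$, matching the list above; an independent route, useful for double-checking the absorbed-cube bookkeeping, is to transport the known description of the maximal cubes of Fibonacci cubes through the decomposition $\Phi_n\cong\Gamma_{n-1}^*\boxplus\Gamma_{n-4}^*$ of Lemma~\ref{lem:struc}.
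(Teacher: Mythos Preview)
Your approach is exactly the one the paper indicates: the paper simply states that the recurrence follows from Lemma~\ref{lem:struc} and gives no further detail, and you have correctly supplied the missing ingredient --- the behaviour of \emph{maximal} cubes under a convex expansion $L\boxplus K$ --- and applied it through the decomposition $\Phi_n\cong(\Phi_{n-2}\boxplus\Phi_{n-2})\boxplus\Phi_{n-3}$. Your absorption bookkeeping is right (every maximal cube of $\Phi_{n-2}\boxplus\Phi_{n-2}$ straddles both copies, so none lies in the inner cutting $\Phi_{n-3}$), and the numerical checks confirm it.
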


By Proposition~\ref{prop:rec-h} the recurrence relation of $H_n(x)$ is given easily.
\begin{proposition}\label{prop:rec-H}
	For $n \ge 6$,
	\[
	H_n(x) = xH_{n-2}(x) + xH_{n-3}(x).
	\]
\end{proposition}

 The $(1,3,3)$-Padovan number $p_n$ is defined as: $p_0=1$, $p_1=3$, $p_2=3$,$p_n=p_{n-2}+p_{n-3}$, for $n\ge 3$. Hence we have the following corollary.
 \begin{corollary}
  For $n \ge 3$,
  \[
     H_n(1)=p_{n-2}.
  \]
 \end{corollary}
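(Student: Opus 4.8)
The plan is to evaluate the recurrence of Proposition~\ref{prop:rec-H} at $x=1$ and then reduce the statement to a short induction that matches the Padovan recursion. Writing $a_n := H_n(1)$, Proposition~\ref{prop:rec-H} immediately gives $a_n = a_{n-2} + a_{n-3}$ for $n \ge 6$. On the other side, the $(1,3,3)$-Padovan sequence satisfies $p_m = p_{m-2} + p_{m-3}$ for $m \ge 3$; substituting $m = n-2$ yields $p_{n-2} = p_{n-4} + p_{n-5}$ whenever $n \ge 5$. So $a_n$ and $p_{n-2}$ satisfy the very same three-term recursion on the range $n \ge 6$, and it remains only to line up enough initial values.

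Next I would pin down the base cases. From the explicit list of maximal cube polynomials, $H_3(1) = 3$, $H_4(1) = 2 + 1 = 3$ and $H_5(1) = 4$, while from the definition $p_1 = 3$, $p_2 = 3$ and $p_3 = p_1 + p_0 = 3+1 = 4$; hence $a_n = p_{n-2}$ holds for $n = 3, 4, 5$. A (strong) induction on $n$ then finishes it: for $n \ge 6$ the indices $n-2$ and $n-3$ both lie in $\{3,\dots,n-1\}$, so the induction hypothesis gives $a_{n-2} = p_{n-4}$ and $a_{n-3} = p_{n-5}$, and therefore $a_n = a_{n-2} + a_{n-3} = p_{n-4} + p_{n-5} = p_{n-2}$, the last equality being the Padovan recursion at $m = n-2 \ge 4$.

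I do not expect a genuine obstacle here; the only thing requiring care is the bookkeeping of indices. One must note that the recurrence at index $n$ skips the term $a_{n-1}$, so three base cases ($n=3,4,5$) rather than two are needed to launch the induction, and one must check that the Padovan identity $p_m = p_{m-2}+p_{m-3}$ is legitimately applied at $m = n-2$ throughout the inductive range (it is, since $n \ge 6$ forces $m \ge 4 \ge 3$). As an alternative route one could instead derive the generating function $\sum_{n \ge 0} H_n(x) y^n$ from Proposition~\ref{prop:rec-H} and the initial polynomials $H_0,\dots,H_5$, put $x = 1$, and compare the result with $\sum_{m \ge 0} p_m y^m = \frac{1+3y+2y^2}{1-y^2-y^3}$; but the inductive argument above is shorter and entirely self-contained.
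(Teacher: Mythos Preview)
Your proof is correct and follows exactly the approach the paper has in mind: the paper places the corollary immediately after the recurrence $H_n(x)=xH_{n-2}(x)+xH_{n-3}(x)$ and the definition of $p_n$, treating the result as an immediate consequence (``Hence we have the following corollary''), and your argument simply makes that implicit matching of recurrences and initial values explicit. The bookkeeping (three base cases $n=3,4,5$, induction for $n\ge 6$) is handled correctly.
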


Furthermore, we can obtain the generating function of $H_n(x)$ by Proposition~\ref{prop:rec-H}.
\begin{theorem}\label{th:gf-H}
	The generating function of $H_n(x)$ is
	\begin{align*}
	\sum_{n=0}^\infty H_n(x) y^n &= \frac{1 +xy(1+y) +2xy^3(1+y) -x^2y^3(1+y)^2)}{1-xy^2(1+y)}\\
     &= -2y+xy+xy^2+\frac{1+2y}{1-xy^2(1+y)}.
    \end{align*}
\end{theorem}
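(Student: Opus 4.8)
The plan is to solve the linear recurrence of Proposition~\ref{prop:rec-H} by the usual generating-function bookkeeping. Let $H(x,y) := \sum_{n\ge 0} H_n(x)y^n$ be the bivariate generating function we want to pin down. Multiplying $H_n(x) = xH_{n-2}(x) + xH_{n-3}(x)$ by $y^n$ and summing over the range $n\ge 6$ in which the recurrence is valid, then reindexing the two sums on the right, gives
\[
\sum_{n\ge 6}H_n(x)y^n = xy^2\sum_{m\ge 4}H_m(x)y^m + xy^3\sum_{m\ge 3}H_m(x)y^m .
\]
Each tail is rewritten as $\sum_{m\ge j}H_m(x)y^m = H(x,y) - \sum_{m=0}^{j-1}H_m(x)y^m$, so that the displayed identity becomes a single linear equation in $H(x,y)$.

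Next I would substitute the explicitly listed initial polynomials $H_0=1$, $H_1=x$, $H_2=2x$, $H_3=3x$, $H_4=2x+x^2$, $H_5=4x^2$ (these are exactly the boundary data needed, since the recurrence first holds at $n=6$ and reaches back three steps). Collecting the coefficient of $H(x,y)$ produces $1-xy^2-xy^3 = 1-xy^2(1+y)$, which is precisely the claimed denominator. Moving the remaining polynomial terms to the right and simplifying — grouping the $x^0$, $x^1$ and $x^2$ parts separately to control the cancellations — yields the numerator
\[
1 + xy(1+y) + 2xy^3(1+y) - x^2y^3(1+y)^2 ,
\]
which is the first stated form. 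To get the second form one performs the polynomial division: a direct check shows the numerator equals $(-2y+xy+xy^2)\bigl(1-xy^2(1+y)\bigr) + (1+2y)$, so dividing by $1-xy^2(1+y)$ gives quotient $-2y+xy+xy^2$ and remainder $1+2y$, i.e.\ the expression $-2y+xy+xy^2+\frac{1+2y}{1-xy^2(1+y)}$.

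There is no genuine structural obstacle here; the only thing requiring care is the clerical accuracy of the substitution and the numerator simplification — in particular keeping straight which of $H_0,\dots,H_5$ enter each of the three sums, and remembering that these values are independent input rather than consequences of the recurrence (indeed the recurrence already fails at $n=5$, since $xH_3+xH_2 = 5x^2 \neq 4x^2 = H_5$). As a sanity check I would confirm that expanding the resulting rational function reproduces $H_6(x)=5x^2+x^3$ and $H_7(x)=2x^2+5x^3$.
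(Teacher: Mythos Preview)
Your proposal is correct and follows essentially the same route as the paper: the paper states that the generating-function results are obtained ``similar to the proof of Theorem~\ref{th:gf-AB}'', i.e.\ by summing the recurrence of Proposition~\ref{prop:rec-H} against $y^n$ and solving the resulting linear equation, which is exactly what you do. Your remark that the recurrence fails at $n=5$ (so all six initial values $H_0,\dots,H_5$ are genuinely needed) and your sanity checks are appropriate additions.
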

%

Because $-2y+xy+xy^2$ are parts of $H_1(x)y$ and $H_2(x)y^2$, we have the Proposition~\ref{prop:gf-H}.

\begin{proposition}\label{prop:gf-H}

	For $n\ge 3$,
\[
	H_n(x) =\sum_{k=0}^{\lfloor \frac n2 \rfloor} \left( \binom{k+1}{n-2k} +\binom{k}{n-2k-1} \right)x^k,	
\]
and
\[
h_{n,k} = \binom{k+1}{n-2k} +\binom{k}{n-2k-1}.
\]
\end{proposition}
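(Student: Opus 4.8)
The plan is to read the closed form off directly from the second expression for the generating function in Theorem~\ref{th:gf-H},
\[
\sum_{n=0}^{\infty} H_n(x) y^n = -2y + xy + xy^2 + \frac{1+2y}{1-xy^2(1+y)}.
\]
Since the polynomial $-2y+xy+xy^2$ affects only the coefficients of $y^1$ and $y^2$, for every $n\ge 3$ we have $H_n(x) = [y^n]\dfrac{1+2y}{1-xy^2(1+y)}$, so it suffices to extract this coefficient.

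First I would expand the rational function as a geometric series and then apply the binomial theorem to $(1+y)^j$:
\[
\frac{1+2y}{1-xy^2(1+y)} = (1+2y)\sum_{j\ge 0} x^j y^{2j}(1+y)^j = (1+2y)\sum_{j\ge 0}\sum_{i\ge 0}\binom{j}{i} x^j y^{2j+i},
\]
using the convention $\binom ji = 0$ for $i>j$ (or $i<0$). Collecting the coefficient of $x^k y^n$ forces $j=k$ in both the $1\cdot(\cdots)$ part and the $2y\cdot(\cdots)$ part, with exponent conditions $2k+i=n$ and $2k+i+1=n$ respectively; hence
\[
[x^k y^n]\frac{1+2y}{1-xy^2(1+y)} = \binom{k}{n-2k} + 2\binom{k}{n-2k-1}.
\]
Then I would invoke Pascal's rule $\binom{k+1}{n-2k}=\binom{k}{n-2k}+\binom{k}{n-2k-1}$ to rewrite $\binom{k}{n-2k}+2\binom{k}{n-2k-1}$ as $\binom{k+1}{n-2k}+\binom{k}{n-2k-1}$, which is exactly the claimed $h_{n,k}$; summing over $k$ and noting that $\binom{k}{n-2k}=0$ unless $0\le k\le \lfloor n/2\rfloor$ yields the stated range of summation for $H_n(x)$.

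The argument is essentially bookkeeping, so I do not expect a genuine obstacle; the only points needing care are (i) checking that the correction terms $-2y+xy+xy^2$ really do not interfere for $n\ge 3$, which is immediate since they have degree $\le 2$ in $y$ (indeed one can cross-check against $H_0,H_1,H_2$ in the given list), and (ii) staying consistent with the convention $\binom nk=0$ outside $0\le k\le n$ so that the finite sum in the statement agrees with the coefficient extracted from the series. As an alternative one could argue by induction on $n$ using $h_{n,k}=h_{n-2,k-1}+h_{n-3,k-1}$ from Proposition~\ref{prop:rec-h}, verifying the base cases $n=3,4,5$ from the displayed $H_n(x)$ and then showing $\binom{k+1}{n-2k}+\binom{k}{n-2k-1}$ satisfies the same recurrence via Pascal's rule; but the generating-function route is shorter.
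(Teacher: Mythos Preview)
Your proof is correct and follows essentially the same route as the paper: expand $\dfrac{1+2y}{1-xy^2(1+y)}$ as a geometric series in $xy^2(1+y)$, extract the coefficient of $x^ky^n$ to get $\binom{k}{n-2k}+2\binom{k}{n-2k-1}$, and note the correction terms vanish for $n\ge 3$. In fact you are slightly more explicit than the paper, which stops at $\binom{k}{n-2k}+2\binom{k}{n-2k-1}$ without writing out the Pascal-rule step that converts this into the stated form $\binom{k+1}{n-2k}+\binom{k}{n-2k-1}$.
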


\begin{proof}

\begin{align*}
  \frac{1+2y}{1-xy^2(1+y)}
  &= \sum_{j\ge 0}(1+y)^jx^jy^{2j} +2\sum_{j\ge 0}(1+y)^{j}x^{j}y^{2j+1}\\
  &= \sum_{j\ge 0}\sum_{n-2j=0}^{j} \binom{j}{n-2j}x^jy^n +2\sum_{j\ge 0}\sum_{n-2j-1=0}^{j} \binom{j}{n-2j-1}x^{j1}y^n\\
  &= \sum_{n\ge 0}\sum_{k=\lceil \frac n3\rceil}^{\lfloor \frac n2 \rfloor}\binom{k}{n-2k}x^ky^n +2\sum_{n\ge 0}\sum_{k =\lceil \frac{n-1}{3}\rceil}^{\lfloor \frac{n-1}{2} \rfloor}\binom{k}{n-2k-1}x^ky^n.
 \end{align*}
 The proof is completed.
\end{proof}

\subsection{Degree sequences polynomials}\label{subsec:deg}

The degree sequences polynomial of $\Phi_n$ is $D_n(x) = \sum_{k\ge 0} d_{n,k} x^k$, where $d_{n,k} := d_k(\Phi_n)$ denoted the number of vertices of the degree $k$ in $\Phi_n$, i.e.\ $d_{n,k} = |\{\,v\in V(\Phi_n) \mid \operatorname{deg}_{\Phi_n}(v)=k\,\}|$. The first few of $D_n(x)$ are listed

\begin{align*}
  D_0(x) &= 1 \\
  D_1(x) &= 2x \\
  D_2(x) &= 2x+x^2 \\
  D_3(x) &= 2x+2x^2 \\
  D_4(x) &= x+4x^2+x^3 \\
  D_5(x) &= 5x^2+4x^3+x^4 \\
  D_6(x) &= 3x^2+9x^3+3x^4+x^5\\
  D_7(x) &= x^2+11x^3+10x^4+3x^5+x^6\\
\end{align*}

  The recurrence relation $d_{n,k}$ is illustrated in the Figure~\ref{fig:mlc-deg}.

\begin{figure}[!htbp]
	\centering
	\begin{tikzpicture}[scale=0.8]
		\draw[rotate=30] (0,0) ellipse (2cm and 1cm)
		(1.8,-2) ellipse (2cm and 1cm)
		(0.7,0.16) ellipse (1.224cm and 0.612cm)
		(1.62,1.58) ellipse (1.224cm and 0.612cm);
		\draw (-1.732+0.33,-1-0.26) -- (0.9+0.28,-1.92-0.18)
		(-1.732-0.3+3.34,-1+0.14+2.15+1.7) -- (-1.732-0.3+3.34,-1+0.14+2.15) -- (0.9-0.28+3.34,-1.92+0.2+2.15)
		(-1.732+0.15+3.34-2,-1-0.09+2.15+1.72-1.37) -- (-1.732+0.15+3.34-2,-1-0.09+2.15+1.72-1.38-1.71);
		\draw[dashed,rotate=30] (3.43,-0.5) ellipse (1.224cm and 0.612cm);
		\draw[dashed] (-1.732-0.3+3.34,-1+0.14+2.15+1.7) -- (0.9-0.15+3.34,-1.92+0.12+2.15+1.72)
		(-1.732+0.15+3.34-2,-1-0.09+2.15+1.72-1.38) -- (0.9+3.34-2,-1.92+2.15+1.72-1.38);
		
		\node at (0.6,2.1) {$\Phi_{n-3}$};
		\node at (2.5,-1) {$\Phi_{n-2}$};
		\node at (1.2,-2.5) {$\Phi_{n}$};
	\end{tikzpicture}
	\caption{Illustrating the recurrence relation of $d_{n,k}$}
	\label{fig:mlc-deg}
\end{figure}
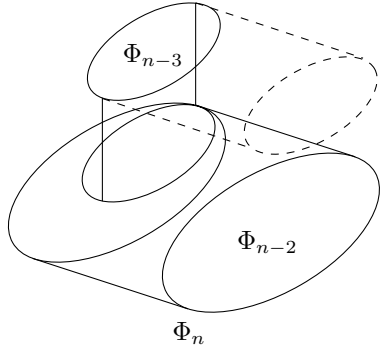

\begin{proposition}\label{prop:rec-d}
For $n\ge 4$,
\[
d_{n,k} = d_{n-2,k-1} + d_{n-1,k-1} - d_{n-3,k-2} + d_{n-3,k-1}.
\]
\end{proposition}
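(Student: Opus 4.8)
The plan is to derive the recurrence from the recursive description of $\Phi_n$ as a convex expansion, tracking how the degree of each vertex changes. By Lemma~\ref{lem:struc} — which comes from Lemma~\ref{lem:conexp} applied to $\Phi_n$ with $x=x_n$ — we may write $\Phi_n\cong L\boxplus K$ with $L=\Phi_{n-1}$ and $K\cong\Phi_{n-2}$ a cutting of $L$; applying Lemma~\ref{lem:conexp} once more to $\Phi_{n-1}$ (deleting $x_{n-1}$) displays inside $L$ a copy of $\Phi_{n-2}$ and a copy of $\Phi_{n-3}$, and one further step gives $\Phi_{n-2}\cong\Phi_{n-3}\boxplus\Phi_{n-4}$, so that $V(\Phi_n)$ is laid out as pictured in Figure~\ref{fig:mlc-deg}. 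The first ingredient is the local effect of a single expansion $L\boxplus K$ on degrees: inspecting the four defining order relations, a vertex $v\in L\setminus K$ keeps all of its covers (a cover whose top endpoint lies in $K$ is merely redirected to the corresponding primed vertex), so $\deg_{L\boxplus K}(v)=\deg_L(v)$; a vertex $v\in K$ loses all of its lower covers but gains the single new edge to $v'$, so $\deg_{L\boxplus K}(v)=d^{+}_L(v)+1$; and a copy $v'\in K'$ is joined exactly to $v$ and to every lower cover of $v$ in $L$, so $\deg_{L\boxplus K}(v')=d^{-}_L(v)+1$. Here $d^{+}_L(v)$ and $d^{-}_L(v)$ count the upper, resp.\ lower, covers of $v$ in $L$, and $d^{+}_L(v)+d^{-}_L(v)=\deg_L(v)$.

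Summing over all vertices, these rules give
\[
D_{L\boxplus K}(x)=D_L(x)+\sum_{v\in K}\Bigl(x^{\,d^{+}_L(v)+1}+x^{\,d^{-}_L(v)+1}-x^{\,d^{+}_L(v)+d^{-}_L(v)}\Bigr).
\]
The heart of the proof is to evaluate the correction sum when $L=\Phi_{n-1}$ and $K\cong\Phi_{n-2}$ is the cutting furnished by Lemma~\ref{lem:conexp} — concretely, the interval of $\mathcal{F}(\phi_{n-1})$ consisting of the filters of $\phi_n-x_n$ that contain the strict up-set of $x_n$ and avoid its strict down-set. The claim I would establish is that $d^{\pm}_{\Phi_{n-1}}(v)$ coincides with the number of upper/lower covers of $v$ computed inside $K\cong\Phi_{n-2}$ for every $v\in K$ except for a "boundary" set — those elements of $K$ adjacent in $\Phi_{n-1}$ to the $\Phi_{n-3}$-summand (and, after forming $\Phi_n$, to the copy $K'$) — and that this boundary set is indexed by a subposet isomorphic to $\Phi_{n-3}$. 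Feeding this into the correction sum and simplifying yields the generating-function identity
\[
D_n(x)=x\bigl(D_{n-1}(x)+D_{n-2}(x)+D_{n-3}(x)\bigr)-x^{2}D_{n-3}(x),
\]
and extracting the coefficient of $x^{k}$ gives precisely $d_{n,k}=d_{n-1,k-1}+d_{n-2,k-1}-d_{n-3,k-2}+d_{n-3,k-1}$.

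The main obstacle is exactly the evaluation of the correction sum: one must identify which elements of the cutting $K$ are the boundary elements, verify that the up/down cover counts in $\Phi_{n-1}$ differ from those in $\Phi_{n-2}$ along a subposet isomorphic to $\Phi_{n-3}$, and then keep track of the extra edges precisely enough that the $-x^{2}D_{n-3}(x)$ term appears with the correct sign. Since the two ends of the $S$-fence $\phi_n$, and hence the extreme elements $\hat0$ and $\hat1$ of all the filter lattices involved, have different shapes according to the parity of $n$ (cf.\ the two cases of the $S$-fence in Figure~1 and the two cases of Figure~\ref{fig:struc}), this bookkeeping has to be carried out separately for $n$ even and $n$ odd. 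Finally the small cases should be checked directly against the displayed values of $D_n(x)$, because for small $n$ the iterated cuttings $\mathcal{F}(\phi_n\ast x_n)$, $\mathcal{F}(\phi_{n-1}\ast x_{n-1})$, $\dots$ are not yet full $S$-fence filter lattices and the stated pattern only stabilises once $n$ is sufficiently large.
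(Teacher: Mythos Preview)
The paper does not give a written proof here; it simply points to Figure~\ref{fig:mlc-deg} and states the recurrence. Your plan---unfold the convex-expansion recursion of Lemma~\ref{lem:struc} and track how degrees change---is exactly the idea that figure is meant to convey, so at the level of strategy you agree with the paper.

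There is, however, a genuine error in your local degree analysis. In the convex expansion as actually used throughout the paper (and as it must be for $\Gamma_n\cong\Gamma_{n-1}\boxplus\Gamma_{n-2}$ and $\Phi_n\cong\Phi_{n-1}\boxplus\Phi_{n-2}$ to produce the intended graphs), the Hasse diagram of $L\boxplus K$ is the Hasse diagram of $L$ together with a disjoint copy $K'$ of the Hasse diagram of $K$, plus the matching $\{vv':v\in K\}$; no edge of $L$ is removed or ``redirected''. Hence a vertex $v\in K$ satisfies $\deg_{L\boxplus K}(v)=\deg_L(v)+1$, not $d^{+}_L(v)+1$, and a vertex $v'\in K'$ satisfies $\deg_{L\boxplus K}(v')=\deg_K(v)+1$, not $d^{-}_L(v)+1$. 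A quick check: in $\Phi_5=\Phi_4\boxplus\Phi_3$ the vertex $\{x_1,x_4\}\in K$ has $\deg_{\Phi_4}=3$ and a single upper cover; your formula assigns it degree $2$ in $\Phi_5$, but it is in fact the unique degree-$4$ vertex recorded in $D_5(x)=5x^2+4x^3+x^4$. (Your reading follows the four displayed order rules literally; taken verbatim those rules would make $\Gamma_3=\Gamma_2\boxplus\Gamma_1$ a five-vertex chain, which it is not, so the display must be read as describing the standard construction.)

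With the correct local rules one obtains instead
\[
D_n(x)=xD_{n-2}(x)+\sum_{v\in\Phi_{n-1}\setminus K}x^{\deg_{\Phi_{n-1}}(v)}+x\sum_{v\in K}x^{\deg_{\Phi_{n-1}}(v)},
\]
and the remaining work is to identify how the cutting $K\cong\Phi_{n-2}$ for the outer expansion overlaps the pieces of the inner expansion $\Phi_{n-1}\cong\Phi_{n-2}\boxplus\Phi_{n-3}$; these two copies of $\Phi_{n-2}$ are \emph{not} the same, and sorting out which vertices lie in one, the other, or both is precisely what Figure~\ref{fig:mlc-deg} encodes. The target identity $D_n(x)=xD_{n-1}(x)+xD_{n-2}(x)+(x-x^2)D_{n-3}(x)$ you write down is correct (it is Proposition~\ref{prop:rec-D}), but your proposed route to it via the $d^{\pm}_L$-formulas and the displayed ``correction sum'' does not work, and since you leave the decisive computation as an acknowledged obstacle, the proposal is not yet a proof.
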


\begin{proposition}\label{prop:rec-D}
For $n\ge 6$,
\[
D_n(x) = xD_{n-1}(x) + xD_{n-2}(x) + (x-x^2)D_{n-3}(x).
\]
\end{proposition}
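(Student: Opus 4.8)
The plan is to obtain the polynomial recurrence by summing the coefficient-wise recurrence of Proposition~\ref{prop:rec-d}, in exactly the same manner that Proposition~\ref{prop:rec-Q} follows from Proposition~\ref{prop:rec-q} and Proposition~\ref{prop:rec-H} from Proposition~\ref{prop:rec-h}. For $n$ in the admissible range I would multiply the identity
\[
d_{n,k} = d_{n-2,k-1} + d_{n-1,k-1} - d_{n-3,k-2} + d_{n-3,k-1}
\]
by $x^k$ and sum over $k \ge 0$.

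Each of the four resulting series is a shifted degree-sequence polynomial. Reindexing (and using that $d_{m,j} = 0$ for $j < 0$, so no stray low-order terms appear) gives $\sum_{k\ge 0} d_{n-2,k-1}x^k = xD_{n-2}(x)$, $\sum_{k\ge 0} d_{n-1,k-1}x^k = xD_{n-1}(x)$, $\sum_{k\ge 0} d_{n-3,k-2}x^k = x^2 D_{n-3}(x)$ and $\sum_{k\ge 0} d_{n-3,k-1}x^k = xD_{n-3}(x)$. Adding these, the two $D_{n-3}$-terms combine with coefficient $x - x^2$, which yields
\[
D_n(x) = xD_{n-1}(x) + xD_{n-2}(x) + (x-x^2)D_{n-3}(x),
\]
as claimed.

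The one point that needs care is the range of $n$. Since Proposition~\ref{prop:rec-d} (and the decomposition $\Phi_n \cong (\Phi_{n-2}\boxplus\Phi_{n-2})\boxplus\Phi_{n-3}$ of Lemma~\ref{lem:struc} underlying it) is only valid once $n$ is large enough, the summed identity holds only from some threshold on; comparing with the tabulated polynomials $D_0(x),\dots,D_7(x)$ shows that the displayed recurrence first becomes correct at $n = 6$, so I would verify the cases $n = 6$ and $n = 7$ by hand and restrict the statement to $n \ge 6$. Apart from this bookkeeping about the base range, the proof is routine: the conceptually substantive input — tracking how vertex degrees change under the convex-expansion operation, which is what establishes Proposition~\ref{prop:rec-d} — has already been done, so here there is no real obstacle beyond the index shuffling.
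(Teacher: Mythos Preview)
Your proposal is correct and follows exactly the route implied by the paper: it simply states Proposition~\ref{prop:rec-D} after Proposition~\ref{prop:rec-d} without writing out the summation, relying on the same multiply-by-$x^k$-and-sum manipulation used in the proof of Proposition~\ref{prop:rec-Q}. Your discussion of the threshold is in fact more careful than the paper's --- Proposition~\ref{prop:rec-d} is stated for $n\ge 4$, but the underlying decomposition in Lemma~\ref{lem:struc} requires $n\ge 6$, and (as you observe from the tabulated $D_n(x)$) the recurrence fails at $n=4,5$; so deriving the correct range by checking against the initial data is the right way to close the argument.
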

Therefore, the generating function of $D_n(x)$ is obtained.
\begin{theorem}\label{th:gf-D}
  The generating function of $D_n(x)$ is given by
  \[
  \sum_{n\ge 0}D_n(x)y^n = -y+xy+x^2y^2+\frac{1+y -x^2y^2} {(1-xy)(1-xy^2)-xy^3}.
  \]
\end{theorem}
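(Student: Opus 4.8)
The plan is to turn the linear recurrence of Proposition~\ref{prop:rec-D} into a functional equation for the formal power series $D(y):=\sum_{n\ge 0}D_n(x)y^n$ (with $x$ regarded as a parameter) and then solve for $D(y)$. The first observation is that the denominator appearing in the claimed formula factors as
\[
(1-xy)(1-xy^2)-xy^3 = 1-xy-xy^2-(x-x^2)y^3,
\]
which is exactly the characteristic polynomial attached to the recurrence $D_n=xD_{n-1}+xD_{n-2}+(x-x^2)D_{n-3}$; this is the structural reason the generating function has the stated shape.

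Next I would multiply the identity of Proposition~\ref{prop:rec-D} by $y^n$ and sum over $n\ge 6$. Using $\sum_{n\ge 6}D_ny^n=D(y)-\sum_{n=0}^{5}D_ny^n$ and, after the appropriate index shifts, $\sum_{n\ge 6}D_{n-1}y^{n-1}=D(y)-\sum_{n=0}^{4}D_ny^n$, $\sum_{n\ge 6}D_{n-2}y^{n-2}=D(y)-\sum_{n=0}^{3}D_ny^n$, and $\sum_{n\ge 6}D_{n-3}y^{n-3}=D(y)-\sum_{n=0}^{2}D_ny^n$, one collects everything on one side to obtain
\[
\bigl(1-xy-xy^2-(x-x^2)y^3\bigr)D(y)=P(y),
\]
where $P(y):=\sum_{n=0}^{5}D_ny^n-xy\sum_{n=0}^{4}D_ny^n-xy^2\sum_{n=0}^{3}D_ny^n-(x-x^2)y^3\sum_{n=0}^{2}D_ny^n$ is a polynomial of degree $5$ in $y$.

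Then I would substitute the explicit initial values $D_0(x)=1$, $D_1(x)=2x$, $D_2(x)=2x+x^2$, $D_3(x)=2x+2x^2$, $D_4(x)=x+4x^2+x^3$, $D_5(x)=5x^2+4x^3+x^4$ and collect coefficients of powers of $y$, which gives $P(y)=1+xy+(x-x^2)y^2+(x-x^2-x^3)y^3+(x-2x^2)y^4+(x^4-x^3)y^5$. The final step is to extract the polynomial part: a direct expansion verifies the identity
\[
P(y)=(-y+xy+x^2y^2)\bigl(1-xy-xy^2-(x-x^2)y^3\bigr)+\bigl(1+y-x^2y^2\bigr),
\]
so dividing by the denominator yields $D(y)=-y+xy+x^2y^2+\frac{1+y-x^2y^2}{(1-xy)(1-xy^2)-xy^3}$, as claimed.

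Every computation here is routine; the only points needing attention are the careful accounting of the boundary terms (the recurrence of Proposition~\ref{prop:rec-D} holds only for $n\ge 6$, so the low-degree coefficients of $D(y)$ must be handled by hand) and the repackaging of $P(y)$ into ``polynomial plus proper rational function'' form that produces the correction term $-y+xy+x^2y^2$. As a consistency check, expanding the right-hand side as a power series should reproduce $D_6(x)=3x^2+9x^3+3x^4+x^5$ and $D_7(x)=x^2+11x^3+10x^4+3x^5+x^6$.
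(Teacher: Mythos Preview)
Your proof is correct and follows essentially the same route as the paper: the paper does not spell out a proof of Theorem~\ref{th:gf-D} but remarks that generating-function identities are obtained ``similar to the proof of Theorem~\ref{th:gf-AB}'', i.e.\ by summing the recurrence of Proposition~\ref{prop:rec-D} against $y^n$, handling the initial terms, and solving the resulting linear equation for $\sum_{n\ge 0}D_n(x)y^n$. Your explicit computation of $P(y)$ and its repackaging as $(-y+xy+x^2y^2)\bigl(1-xy-xy^2-(x-x^2)y^3\bigr)+(1+y-x^2y^2)$ are both correct, and your care in starting the summation at $n\ge 6$ (the range where Proposition~\ref{prop:rec-D} is stated) is appropriate.
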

%
\begin{proposition}\label{pro:d}
  For $n\ge 3$, the number of vertices of degree $k$ of $\Phi_n$ is
  \[
  d_{n,k} = \sum_{j=0}^k\left(\binom{n-2j}{k-j}\binom{j}{n-k-j} +\binom{n-2j-1}{k-j}\binom{j}{n-k-j-1} -\binom{n-2j-2}{k-j-2}\binom{j}{n-k-j}\right).
  \]
\end{proposition}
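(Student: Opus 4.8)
The plan is to extract coefficients directly from the generating function of Theorem~\ref{th:gf-D}. Since the polynomial correction $-y+xy+x^2y^2$ only affects the coefficients of $y^0,y^1,y^2$, for $n\ge 3$ one has
\[
d_{n,k}=[x^ky^n]\,\frac{1+y-x^2y^2}{(1-xy)(1-xy^2)-xy^3},
\]
so it is enough to expand $G(x,y):=\bigl((1-xy)(1-xy^2)-xy^3\bigr)^{-1}$ as a formal double power series and then treat the numerator $1+y-x^2y^2$ as a sum of three shift operators on the coefficient array.

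To expand $G$, I would write the denominator as $(1-xy)(1-xy^2)\bigl(1-\tfrac{xy^3}{(1-xy)(1-xy^2)}\bigr)$ and apply the geometric series to obtain
\[
G(x,y)=\sum_{m\ge 0}\frac{x^my^{3m}}{(1-xy)^{m+1}(1-xy^2)^{m+1}}.
\]
Expanding each factor by the negative binomial theorem, $(1-xy)^{-(m+1)}=\sum_a\binom{a+m}{m}x^ay^a$ and $(1-xy^2)^{-(m+1)}=\sum_b\binom{b+m}{m}x^by^{2b}$, gives $G(x,y)=\sum_{m,a,b\ge 0}\binom{a+m}{m}\binom{b+m}{m}x^{a+b+m}y^{a+2b+3m}$. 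Comparing exponents in $[x^ky^n]G$ forces $a+b+m=k$ and $a+2b+3m=n$, so $b=n-k-2m$ and $a=2k-n+m$; putting $j:=n-k-m$ (hence $m=n-k-j$, $a=k-j$, $b=2j-(n-k)$) and using $\binom{b+m}{m}=\binom{n-k-m}{m}=\binom{j}{n-k-j}$ together with $\binom{a+m}{m}=\binom{n-2j}{n-k-j}=\binom{n-2j}{k-j}$, I get
\[
[x^ky^n]\,G(x,y)=\sum_{j}\binom{n-2j}{k-j}\binom{j}{n-k-j},
\]
the summand being zero for $j\notin\{0,\dots,k\}$ under the binomial convention fixed in the paper.

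It then remains to apply the numerator: the term $1$ contributes $[x^ky^n]G$, the term $y$ contributes $[x^ky^{n-1}]G$, and the term $-x^2y^2$ contributes $-[x^{k-2}y^{n-2}]G$, so that $d_{n,k}=[x^ky^n]G+[x^ky^{n-1}]G-[x^{k-2}y^{n-2}]G$ for $n\ge 3$; substituting the closed form above into each of the three pieces and tidying the binomial arguments (for instance $(n-1)-2j=n-2j-1$, $(n-1)-k-j=n-k-j-1$, and $(n-2)-(k-2)-j=n-k-j$) yields precisely the three summands in the statement, each with summation index contained in $\{0,\dots,k\}$. I do not expect a genuine obstacle here: the two potential sticking points are the termwise rearrangement of the triple sum, which is legitimate because everything is a formal power series in $y$ over $\mathbb{Z}[x]$, and the careful use of the symmetry $\binom{n-2j}{n-k-j}=\binom{n-2j}{k-j}$ to bring all three shifted copies to a common index $j$; both are bookkeeping rather than a new idea. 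As a cross-check one can also verify the recurrence of Proposition~\ref{prop:rec-d} against the closed form, which gives an alternative inductive proof.
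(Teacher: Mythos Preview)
Your proposal is correct and follows essentially the same route as the paper: both expand the reciprocal of $(1-xy)(1-xy^2)-xy^3$ as a geometric series in $xy^3/((1-xy)(1-xy^2))$, apply the negative binomial expansion to each factor, reindex to obtain $[x^ky^n]G=\sum_j\binom{n-2j}{k-j}\binom{j}{n-k-j}$, and then read off the three shifted copies coming from the numerator $1+y-x^2y^2$. The only differences are notational (your $G,m,a,b$ versus the paper's $f,t,i,j$) and your explicit reindexing via $j:=n-k-m$, which the paper leaves implicit.
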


\begin{proof}
The way is similar to the method of \cite{aKlavzMP11}. Using the expansion

\[
\frac{x^n}{(1-x)^{n+1}} =\sum_{j\ge n}\binom{j}{n}x^j,
\]

we consider the formal power series expansion of
 \[
 f(x,y) =\frac{1}{(1-xy)(1-xy^2)-xy^3},
 \]
 \begin{align*}
     f(x,y) &= \frac{1}{(1-xy)(1-xy^2)-xy^3}\\
       &= \frac{(1-xy)^{-1}(1-xy^2)^{-1}}{1-xy^3(1-xy)^{-1}(1-xy^2)^{-1}}\\
       &= \sum_{t \ge 0 }x^t y^{3t}(1-xy)^{-t-1}(1-xy^2)^{-t-1}\\
       &= \sum_{t \ge 0} \frac{(xy)^t}{(1-xy)^{t+1}} \frac{(xy^2)^t}{(1-xy^2)^{t+1}}x^{-t}\\
       &= \sum_{t\ge 0} \sum_{i\ge t} \binom it (xy)^i \sum_{j\ge t}\binom jt (xy^2)^j x^{-t}\\
       &= \sum_{n \ge 0} \sum_{k=0}^n \sum_{j=0}^k \binom{n-2j}{k-j} \binom{j}{n-k-j}x^ky^n.
 \end{align*}

Thus
 \[
 [x^k][y^n]f(x,y) =\sum_{j=0}^k \binom{n-2j}{k-j} \binom{j}{n-k-j}.
 \]
 Note that
 $$
 \frac{1+y-x^2y^2}{(1-xy)(1-xy^2)-xy^3} =f(x,y) +yf(x,y)-x^2y^2f(x,y),
 $$
 then
 \begin{align*}
   &~[x^k][y^n]\frac{1+y-x^2y^2}{(1-xy)(1-xy^2)-xy^3}\\
   &= [x^k][y^n]f(x,y) +[x^k][y^{n-1}]f(x,y) -[x^{k-2}][y^{n-2}]f(x,y)\\
   &= \sum_{j=0}^k \binom{n-2j}{k-j}\binom{j}{n-k-j} +\sum_{j=0}^k \binom{n-2j-1}{k-j}\binom{j}{n-k-j-1}\\ &~-\sum_{j=0}^k \binom{n-2j-2}{k-j-2}\binom{j}{n-k-j}.
 \end{align*}
\end{proof}

  We have a similar result as Corollary \ref{cor:fibs} as follows.
  \begin{corollary} For $n \geq 3$,
  	$$\sum_{k=0}^n d_{n,k}=2F_n.$$
  \end{corollary}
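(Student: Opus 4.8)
The statement to prove is the final Corollary: $\sum_{k=0}^n d_{n,k} = 2F_n$ for $n \geq 3$.

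My plan is as follows. The quantity $\sum_{k=0}^n d_{n,k} = D_n(1)$ counts all vertices of $\Phi_n$ weighted by $1$, which is simply the number of vertices of $\Phi_n$. We already know from the earlier Corollary (``The number of vertices of $\Phi_n$ is $2F_n$'') — equivalently from $q_{n,0} = 2F_n$ in the Cube polynomials subsection, or from evaluating the rank generating function at $x=1$ — that $|V(\Phi_n)| = 2F_n$. So the cleanest proof is a one-line identification: $\sum_{k=0}^n d_{n,k}$ counts each vertex of $\Phi_n$ exactly once (sorted by degree), hence equals $|V(\Phi_n)| = 2F_n$.

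Alternatively, if one wants a self-contained argument from the generating function of $D_n(x)$ in Theorem~\ref{th:gf-D}, the plan is to substitute $x=1$ into
\[
\sum_{n\ge 0}D_n(x)y^n = -y+xy+x^2y^2+\frac{1+y-x^2y^2}{(1-xy)(1-xy^2)-xy^3},
\]
obtaining $\sum_{n\ge 0}D_n(1)y^n = y + y^2 + \frac{1+y-y^2}{(1-y)(1-y^2)-y^3}$. One then simplifies the denominator: $(1-y)(1-y^2)-y^3 = 1 - y - y^2 + y^3 - y^3 = 1 - y - y^2$, and the numerator $1+y-y^2$, so the rational part is $\frac{1+y-y^2}{1-y-y^2}$. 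Performing the division, $\frac{1+y-y^2}{1-y-y^2} = 1 + \frac{2y}{1-y-y^2}$, so the whole sum becomes $y + y^2 + 1 + \frac{2y}{1-y-y^2} = 1 + y + y^2 + \frac{2y}{1-y-y^2}$. Comparing with the known generating function $1 + y^2 + \frac{2y}{1-y-y^2}$ for the vertex count (stated just after Theorem~\ref{th:rnx}), the two agree in all coefficients of $y^n$ for $n \neq 1$; extracting $[y^n]$ for $n \geq 3$ gives $D_n(1) = 2F_n$ since $[y^n]\frac{2y}{1-y-y^2} = 2F_n$.

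There is essentially no obstacle here: the only care needed is the polynomial division and the bookkeeping of the low-order correction terms $-y+xy+x^2y^2$, which contribute only to $n=1$ and $n=2$ and therefore do not affect the $n \geq 3$ claim. I would present the short combinatorial argument (sum of $d_{n,k}$ over $k$ equals the vertex count, which is $2F_n$ by the earlier corollary) as the main proof, perhaps with the generating-function computation as a remark confirming consistency of Theorem~\ref{th:gf-D}.

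\begin{proof}
	Summing $d_{n,k}$ over all $k$ counts every vertex of $\Phi_n$ exactly once, so $\sum_{k=0}^n d_{n,k} = D_n(1) = |V(\Phi_n)|$, which equals $2F_n$ for $n\ge 3$ by the corollary in Section~2. Equivalently, setting $x=1$ in Theorem~\ref{th:gf-D} and using $(1-y)(1-y^2)-y^3 = 1-y-y^2$ gives
	\[
	\sum_{n\ge 0}D_n(1)y^n = y+y^2+\frac{1+y-y^2}{1-y-y^2} = 1+y+y^2+\frac{2y}{1-y-y^2},
	\]
	and extracting $[y^n]$ for $n\ge 3$ yields $D_n(1)=2F_n$.
\end{proof}
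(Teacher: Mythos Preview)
Your combinatorial argument is correct and is exactly the intended reasoning: the paper states this as a corollary (with no written proof) immediately after developing $D_n(x)$, in parallel to Corollary~\ref{cor:fibs}, and the implicit justification is precisely that $\sum_k d_{n,k}=D_n(1)=|V(\Phi_n)|=2F_n$.

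Your alternative generating-function check has a small arithmetic slip: at $x=1$ the polynomial part $-y+xy+x^2y^2$ evaluates to $y^2$, not $y+y^2$, so one actually gets
\[
\sum_{n\ge 0}D_n(1)y^n \;=\; y^2+\frac{1+y-y^2}{1-y-y^2} \;=\; 1+y^2+\frac{2y}{1-y-y^2},
\]
which matches the vertex-count generating function stated after Theorem~\ref{th:rnx} \emph{exactly} (not just for $n\ge 3$). This does not affect your conclusion for $n\ge 3$, but you should correct the computation before presenting it.
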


\subsection{Indegree and outdegree polynomials}
  The indegree polynomial of $\Phi_n$ is $D_n^-(x) = \sum_{k=0}^{\lfloor \frac n2 \rfloor} d_{n,k}^-x^k$, where $d_{n,k}^-$ denoted the number of vertices of the indegree $k$ in $\Phi_n$, or the number of anti-chains with only $k$ elements in $\Phi_n$, or the number of the element covered only by $k$ elements in $\Phi_n$. The first few of $D^-_n(x)$ are listed.
  \begin{align*}
	D_0^-(x) &= 1 \\
	D_1^-(x) &= 1+x \\
    D_2^-(x) &= 1+2x \\
	D_3^-(x) &= 1+3x \\
	D_4^-(x) &= 1+4x+x^2 \\
	D_5^-(x) &= 1+5x+4x^2 \\
	D_6^-(x) &= 1+6x+8x^2+x^3 \\
    D_7^-(x) &= 1+7x+13x^2+5x^3.
\end{align*}
\begin{proposition}\label{pro:out-d}
For $n \ge 3$ and $k \ge 1$,
	\[
	d_{n,k}^- = d_{n-1,k}^- + d_{n-2,k-1}^-.
	\]
\end{proposition}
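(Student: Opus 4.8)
The plan is to prove the identity by interpreting $d_{n,k}^-$ as a count of antichains of the $S$-fence itself and then deleting its last element. Recall that in the Hasse diagram of $\mathcal{F}(\phi_n)$ the filters covering a filter $Y$ are exactly the sets $Y\setminus\{a\}$ with $a$ a minimal element of $Y$; hence $\operatorname{indeg}(Y)$ equals the number of minimal elements of $Y$, and since $Y$ is the up-set generated by its minimal elements, $Y\mapsto\min Y$ is a bijection from $\mathcal{F}(\phi_n)$ onto the set of antichains of $\phi_n$ carrying indegree to cardinality. Thus $d_{n,k}^-$ is the number of $k$-element antichains of $\phi_n$, and it suffices to establish the recurrence for that number.

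The structural input I would isolate is that for $n\ge 5$ the element $x_n$ is comparable in $\phi_n$ only to $x_{n-1}$. This follows from the cover relations once one observes that $x_{n-1}$ is an extremum of the underlying zigzag: for $n$ odd it is a maximal element with $x_{n-2}<x_{n-1}>x_n$, and for $n$ even a minimal element with $x_{n-2}>x_{n-1}<x_n$, so the only saturated chain leaving $x_n$ is the single step to $x_{n-1}$ and $x_n$ is incomparable to each of $x_1,\dots,x_{n-2}$. Consequently $\phi_n-x_n$ is the induced $S$-fence on $\{x_1,\dots,x_{n-1}\}$, namely $\phi_{n-1}$, while $\phi_n*x_n=\phi_n\setminus\{x_{n-1},x_n\}$ is the induced $S$-fence on $\{x_1,\dots,x_{n-2}\}$, namely $\phi_{n-2}$; in particular Lemma~\ref{lem:conexp} applied to $x=x_n$ reads $\mathcal{F}(\phi_n)\cong\mathcal{F}(\phi_{n-1})\boxplus\mathcal{F}(\phi_{n-2})$.

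The count then takes one line. A $k$-element antichain $A$ of $\phi_n$ either avoids $x_n$, in which case it is exactly a $k$-element antichain of $\phi_n-x_n=\phi_{n-1}$, giving the summand $d_{n-1,k}^-$; or it contains $x_n$, in which case it must avoid the only comparable neighbour $x_{n-1}$, so $A\setminus\{x_n\}$ is a $(k-1)$-element antichain of $\phi_n*x_n=\phi_{n-2}$. Conversely, since $x_n$ is incomparable to all of $x_1,\dots,x_{n-2}$, appending $x_n$ to any $(k-1)$-element antichain of $\phi_{n-2}$ yields a $k$-element antichain of $\phi_n$ containing $x_n$; hence $A\mapsto A\setminus\{x_n\}$ is a bijection onto the $(k-1)$-element antichains of $\phi_{n-2}$, giving the summand $d_{n-2,k-1}^-$. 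Adding the two yields $d_{n,k}^-=d_{n-1,k}^-+d_{n-2,k-1}^-$.

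I expect the only real work to be the structural step — checking from the cover relations that $x_n$ reaches nothing but $x_{n-1}$ and that the two deletions return honest smaller $S$-fences, treating the two parities of $n$ separately; one should also note that the argument requires the pair $x_{n-1},x_n$ to be disjoint from the exceptional chain $x_1>x_2>x_3$, i.e.\ $n\ge 5$, the remaining small values being read off the list of $D_n^-(x)$ above. A shorter route that mirrors the rest of the paper is to feed $\mathcal{F}(\phi_n)\cong\mathcal{F}(\phi_{n-1})\boxplus\mathcal{F}(\phi_{n-2})$ into the behaviour of the indegree polynomial under a convex expansion (the indegree counterpart of Lemmas~\ref{le:rank} and~\ref{lem:enum}, established in \cite{aWangZY18}).
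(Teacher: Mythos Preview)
Your argument is essentially the paper's intended one: the paper gives no proof of this proposition, but in light of Lemma~\ref{lem:struc} (obtained, as there, by putting $x=x_n$ in Lemma~\ref{lem:conexp}) together with the indegree/antichain interpretation from \cite{aWangZY18}, your antichain count is exactly the unwinding of $\mathcal{F}(\phi_n)\cong\mathcal{F}(\phi_{n-1})\boxplus\mathcal{F}(\phi_{n-2})$ that the paper has in mind. The structural step you carry out --- that $x_n$ is comparable only to $x_{n-1}$ for $n\ge 5$, whence $\phi_n-x_n=\phi_{n-1}$ and $\phi_n*x_n=\phi_{n-2}$ --- is correct, and the bijection on $k$-element antichains is clean.

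There is one genuine gap in your write-up, however. You correctly observe that the structural argument needs $n\ge 5$ and propose to ``read off'' $n=3,4$ from the listed $D_n^-(x)$; but those checks \emph{fail}. From $D_1^-,\dots,D_4^-$ one finds
\[
d_{3,2}^-=0\neq 1=d_{2,2}^-+d_{1,1}^-,\qquad d_{4,2}^-=1\neq 2=d_{3,2}^-+d_{2,1}^-.
\]
Combinatorially this is exactly the obstruction you identified: for $n=4$ the element $x_4$ is comparable to both $x_2$ and $x_3$, so $\phi_4*x_4=\phi_1$, not $\phi_2$, and similarly $\phi_3*x_3=\phi_0$, not $\phi_1$. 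In other words the range in the proposition is a typo in the paper; the correct hypothesis is $n\ge 5$, consistent with the companion polynomial recurrence in Proposition~\ref{pro:ouD}, which is stated for $n\ge 5$. Your proof is complete once you replace ``read off the list'' by this observation.
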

\begin{proposition}\label{pro:ouD}
	For $n \ge 5$,
	\[
	D_n^-(x) = D_{n-1}^-(x) + xD_{n-2}^-(x).
	\]
\end{proposition}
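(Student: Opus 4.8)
The plan is to read the polynomial recurrence straight off the coefficientwise recurrence of Proposition~\ref{pro:out-d}, exactly as Proposition~\ref{prop:rec-Q} was obtained from Proposition~\ref{prop:rec-q} and Proposition~\ref{prop:rec-D} from Proposition~\ref{prop:rec-d}. Fix $n\ge 5$, multiply the identity $d_{n,k}^- = d_{n-1,k}^- + d_{n-2,k-1}^-$ by $x^k$, and sum over $k$: the unshifted family of terms reassembles $D_{n-1}^-(x)$ (once the constant term is restored), while the shifted family satisfies $\sum_k d_{n-2,k-1}^- x^k = x\sum_j d_{n-2,j}^- x^j = xD_{n-2}^-(x)$, which is precisely the claimed identity.

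The only delicate point is the constant term, because Proposition~\ref{pro:out-d} is stated only for $k\ge 1$, so I would peel off $k=0$ before summing. Here one uses that the constant term of every $D_n^-(x)$ equals $1$: the unique vertex of indegree $0$ in the Hasse diagram of $\mathcal{F}(\phi_n)$ is its top element $\emptyset$ (equivalently, the empty antichain is the only antichain with $0$ elements), so $d_{n,0}^- = d_{n-1,0}^- = 1$. With this in hand,
\[
D_n^-(x) = d_{n,0}^- + \sum_{k\ge 1}\bigl(d_{n-1,k}^- + d_{n-2,k-1}^-\bigr)x^k = \bigl(d_{n,0}^- - d_{n-1,0}^-\bigr) + D_{n-1}^-(x) + xD_{n-2}^-(x),
\]
and the bracketed difference vanishes. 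Alternatively, under the convention $\binom{n}{k}=0$ for invalid arguments recalled above, the identity of Proposition~\ref{pro:out-d} also holds at $k=0$, reading $1 = 1+0$, so the sum may be taken over all $k\ge 0$ with no boundary bookkeeping at all, giving a one-line derivation in the style of the proof of Proposition~\ref{prop:rec-Q}.

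There is no genuine obstacle in the argument; the only thing to watch is the range $n\ge 5$. The coefficientwise recurrence of Proposition~\ref{pro:out-d} is available for every relevant $k$ only from $n=5$ on — for $n=4$ it already breaks at the top coefficient, since $d_{4,2}^- = 1$ while $d_{3,2}^- + d_{2,1}^- = 0+2$ — which is exactly why the polynomial recurrence can be asserted only for $n\ge 5$. One could instead deduce the same recurrence from the decomposition $\Phi_n\cong\Gamma_{n-1}^*\boxplus\Gamma_{n-4}^*$ of Lemma~\ref{lem:struc} together with the effect of a convex expansion on indegree sequences, but the route through Proposition~\ref{pro:out-d} is shorter and self-contained.
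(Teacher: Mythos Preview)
Your proof is correct and follows exactly the template the paper uses for the analogous Proposition~\ref{prop:rec-Q}: multiply the coefficient recurrence of Proposition~\ref{pro:out-d} by $x^k$ and sum over $k$. The paper does not spell out a separate proof for Proposition~\ref{pro:ouD}, so your derivation---including the careful treatment of the $k=0$ term and the observation that the coefficient recurrence breaks at $n=4$, forcing the hypothesis $n\ge 5$---is precisely the intended argument.
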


\begin{lemma}[\cite{aWangZY18}]\label{prop:QD-}
	For $n \ge 0$,
	\[
	D_n^-(1+x) = Q_n(x).
	\]
\end{lemma}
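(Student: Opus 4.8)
The plan is to deduce the identity $D_n^-(1+x) = Q_n(x)$ by induction on $n$ from the two linear recurrences already in hand. Replacing $x$ by $1+x$ in Proposition~\ref{pro:ouD} gives, for $n\ge 5$,
\[
D_n^-(1+x) = D_{n-1}^-(1+x) + (1+x)\,D_{n-2}^-(1+x),
\]
which is exactly the recurrence satisfied by $Q_n(x)$ in Proposition~\ref{prop:rec-Q}. Hence, if $D_{n-1}^-(1+x)=Q_{n-1}(x)$ and $D_{n-2}^-(1+x)=Q_{n-2}(x)$, then $D_n^-(1+x) = Q_{n-1}(x) + (1+x)Q_{n-2}(x) = Q_n(x)$, and the identity propagates.

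First I would settle the base cases $n=0,1,2,3,4$ directly from the two tables of initial values; for instance $D_4^-(1+x) = 1 + 4(1+x) + (1+x)^2 = 6 + 6x + x^2 = Q_4(x)$, and the cases $n=0,1,2,3$ are even shorter. The one point needing a moment's attention is that \emph{both} recurrences are valid only from $n\ge 5$, so five consecutive initial values (not two) must be checked by hand; after that, the induction step for $n\ge 5$ always refers back to the already-established cases $n-1\ge 4$ and $n-2\ge 3$, and nothing further is required. In this form the argument is entirely routine and presents no real obstacle.

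If instead one wants the conceptual reason behind Lemma~\ref{prop:QD-} (the viewpoint of \cite{aWangZY18}), note that, expanding $(1+x)^j=\sum_k\binom jk x^k$, the identity $D_n^-(1+x)=Q_n(x)$ is equivalent to the coefficientwise statement $q_{n,k} = \sum_{j}\binom jk\,d_{n,j}^-$. To see the latter directly, recall that in the Hasse diagram of a finite distributive lattice the induced $k$-dimensional hypercubes are precisely the intervals $[a,b]$ isomorphic to the Boolean lattice $\mathbf{2}^k$; for a fixed bottom vertex $a$, such intervals correspond bijectively to the $k$-element sets $\{c_1,\dots,c_k\}$ of elements covering $a$, via $[a,\,c_1\vee\cdots\vee c_k]$ (distributivity forces this interval to have rank exactly $k$). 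Since the number of elements covering $a$ is, by the arc convention defining $\Phi_n$, the indegree $d^-(a)$ of $a$, summing over all vertices yields $q_{n,k} = \sum_{a}\binom{d^-(a)}{k} = \sum_{j}\binom jk\,d_{n,j}^-$, which is the claim. The one nontrivial ingredient here is the identification of induced hypercubes with Boolean intervals — standard for covering graphs of distributive lattices (which are median graphs), but calling for a citation or a short lemma — whereas the recurrence proof above avoids it entirely; I would therefore present the recurrence argument as the main proof and mention the coefficient identity as a remark.
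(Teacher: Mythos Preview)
The paper does not prove this lemma at all; it is imported wholesale from \cite{aWangZY18} and used as a black box. In that reference the result is stated (and proved) for arbitrary finite distributive lattices, via exactly the bijective mechanism you outline in your final paragraph: every induced $k$-cube is a Boolean interval $[a,b]$, and for fixed bottom $a$ such intervals are in bijection with the $k$-subsets of the set of covers of $a$, giving $q_{n,k}=\sum_j\binom{j}{k}d_{n,j}^-$.

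Your recurrence-and-induction argument is correct and entirely self-contained within the present paper: the substitution $x\mapsto 1+x$ in Proposition~\ref{pro:ouD} yields the recurrence of Proposition~\ref{prop:rec-Q}, and the five initial cases $n=0,\dots,4$ check out as you indicate. This is a genuinely different route from the cited source. It buys elementariness --- no lattice theory beyond the recurrences already on the table --- at the cost of being specific to the family $\Phi_n$; the bijective proof, by contrast, explains the identity once and for all for every finite distributive lattice and is the reason the lemma can be cited rather than reproved. Since the present paper offers no argument of its own, either of your proofs is an improvement; presenting the induction as the main proof and the coefficient identity as a remark, as you propose, is a sensible choice.
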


By Propositions \ref{pro:out-d},~\ref{pro:ouD} and Lemma~\ref{prop:QD-}, a similar argument as proof of Theorem~\ref{th:gf-AB} gives the generating function of $D^-_n(x)$ as follows.
\begin{theorem}
	The generating function of $D_n^-(x)$ is
	\[
	\sum_{n\ge 0} D_n^-(x)y^n = \frac{1+xy-x^2y^3-x^2y^4}{1-y-xy^2}.
	\]
\end{theorem}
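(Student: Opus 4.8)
The plan is to mimic the proof of Theorem~\ref{th:gf-AB}: turn the linear recurrence of Proposition~\ref{pro:ouD} into a functional equation for the generating function $G(x,y):=\sum_{n\ge 0}D_n^-(x)y^n$ and then solve for $G$. First I would multiply the identity $D_n^-(x)=D_{n-1}^-(x)+xD_{n-2}^-(x)$, which holds for $n\ge 5$, by $y^n$ and sum over $n\ge 5$. Reindexing the two sums on the right, the equation reads
\[
G(x,y)-\sum_{n=0}^{4}D_n^-(x)y^n \;=\; y\Bigl(G(x,y)-\sum_{n=0}^{3}D_n^-(x)y^n\Bigr)+xy^2\Bigl(G(x,y)-\sum_{n=0}^{2}D_n^-(x)y^n\Bigr).
\]
Collecting the $G(x,y)$ terms gives $(1-y-xy^2)G(x,y)$ equal to an explicit polynomial of degree $4$ in $y$ assembled from the initial data $D_0^-(x)=1$, $D_1^-(x)=1+x$, $D_2^-(x)=1+2x$, $D_3^-(x)=1+3x$, $D_4^-(x)=1+4x+x^2$ listed above.

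Next I would simplify that polynomial. Substituting the five initial values and cancelling, the coefficient of $y^2$ collapses to $0$ while the coefficients of $y^3$ and $y^4$ both become $-x^2$, so the numerator is $1+xy-x^2y^3-x^2y^4$; dividing by $1-y-xy^2$ yields the stated formula. Since the only inputs are a three-term linear recurrence and five low-degree polynomials, there is no genuine obstacle here — the only point requiring care is the range of validity $n\ge 5$ of Proposition~\ref{pro:ouD}, which is why all five boundary terms $D_0^-,\dots,D_4^-$ must be carried (and it is worth checking, e.g., that $D_5^-(x)=D_4^-(x)+xD_3^-(x)$ so that no spurious correction survives).

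Finally, as an independent check one can bypass the recurrence altogether: by Lemma~\ref{prop:QD-} we have $D_n^-(x)=Q_n(x-1)$, so $\sum_{n\ge 0}D_n^-(x)y^n$ is obtained from the generating function of Theorem~\ref{th:gf-Q} by the substitution $x\mapsto x-1$, under which $1+x\mapsto x$ and $(1+x)^2\mapsto x^2$. This turns $\dfrac{1+(1+x)y-(1+x)^2y^3-(1+x)^2y^4}{1-y-(1+x)y^2}$ directly into $\dfrac{1+xy-x^2y^3-x^2y^4}{1-y-xy^2}$, confirming the formula obtained by the recurrence computation.
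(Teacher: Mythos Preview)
Your proposal is correct and follows exactly the approach the paper indicates: convert the recurrence of Proposition~\ref{pro:ouD} into a functional equation for $G(x,y)$ in the manner of Theorem~\ref{th:gf-AB}, and solve. The paper also cites Lemma~\ref{prop:QD-} among the ingredients, so your independent check via the substitution $x\mapsto x-1$ in Theorem~\ref{th:gf-Q} is fully in the spirit of (and a nice supplement to) the paper's own argument.
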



\begin{proposition}\label{pro:ou-d}
	For $n \ge 0$,
	\[
	D_n^-(x) =\sum_{k=0}^{\lfloor \frac{n}{2}\rfloor} \left( \binom{n-k-2}{k-1}+\binom{n-k}{k}\right)x^k.
	\]
	Thus
	\[
	d_{n,k}^- =  \binom{n-k-2}{k-1}+\binom{n-k}{k}.
	\]
\end{proposition}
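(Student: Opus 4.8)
The plan is to read off the coefficient of $x^ky^n$ directly from the generating function
\[
\sum_{n\ge 0} D_n^-(x)\,y^n = \frac{1+xy-x^2y^3-x^2y^4}{1-y-xy^2}
\]
established above, in the spirit of the proof of Proposition~\ref{prop:gf-H}. First I would expand the denominator: writing $1-y-xy^2 = 1-y(1+xy)$ gives
\[
\frac{1}{1-y-xy^2} = \sum_{m\ge 0} y^m(1+xy)^m = \sum_{m\ge 0}\sum_{i\ge 0}\binom{m}{i}x^iy^{m+i},
\]
so that collecting the powers of $y$ yields $[x^ky^n]\bigl(1-y-xy^2\bigr)^{-1}=\binom{n-k}{k}$, a Fibonacci-type coefficient.

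Next I would multiply this series by the numerator $1+xy-x^2y^3-x^2y^4$ and extract $[x^ky^n]$ from each of the four terms in turn; they contribute $\binom{n-k}{k}$, $\binom{n-k}{k-1}$, $-\binom{n-k-1}{k-2}$ and $-\binom{n-k-2}{k-2}$ respectively, so that
\[
d_{n,k}^- = \binom{n-k}{k}+\binom{n-k}{k-1}-\binom{n-k-1}{k-2}-\binom{n-k-2}{k-2}.
\]
I would then collapse the last three terms: Pascal's rule $\binom{n-k}{k-1}=\binom{n-k-1}{k-1}+\binom{n-k-1}{k-2}$ cancels the term $\binom{n-k-1}{k-2}$ and leaves $\binom{n-k-1}{k-1}-\binom{n-k-2}{k-2}$, and a second application $\binom{n-k-1}{k-1}=\binom{n-k-2}{k-1}+\binom{n-k-2}{k-2}$ collapses this to $\binom{n-k-2}{k-1}$, yielding $d_{n,k}^-=\binom{n-k-2}{k-1}+\binom{n-k}{k}$. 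Summing over $k$ and invoking the vanishing convention $\binom{a}{b}=0$ for $b\notin[0,a]$ to truncate the range to $0\le k\le\lfloor n/2\rfloor$ then gives the claimed identity for $D_n^-(x)$.

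The only delicate point --- the main obstacle --- is the boundary bookkeeping: Pascal's rule $\binom{m}{j}=\binom{m-1}{j-1}+\binom{m-1}{j}$ holds under the ``zero outside the triangle'' convention only for $m\ge 1$, so the two collapses above have to be checked across the whole range $0\le k\le\lfloor n/2\rfloor$, which in turn means verifying the first few polynomials $D_0^-(x),\dots,D_4^-(x)$ against the formula directly (as listed). An alternative, self-contained route that sidesteps the binomial manipulation is an induction on $n$ using the recurrence $D_n^-(x)=D_{n-1}^-(x)+xD_{n-2}^-(x)$ of Proposition~\ref{pro:ouD}: after the small base cases, substituting the formula for $d_{n-1,k}^-$ and $d_{n-2,k-1}^-$ and applying Pascal's rule twice reproduces $\binom{n-k-2}{k-1}+\binom{n-k}{k}$ exactly. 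I would present whichever of the two arguments is shorter in the final write-up.
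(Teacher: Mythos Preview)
Your proposal is correct and follows exactly the approach the paper intends: the paper does not write out a separate proof for this proposition but states that it is obtained in the same way as Proposition~\ref{prop:gf-H}, i.e.\ by expanding the generating function $\sum_{n\ge 0}D_n^-(x)y^n$ and reading off $[x^ky^n]$. Your expansion of $(1-y-xy^2)^{-1}$, the term-by-term multiplication by the numerator, and the two applications of Pascal's rule to collapse the four binomials into $\binom{n-k-2}{k-1}+\binom{n-k}{k}$ are all correct, and the boundary discussion (together with the optional induction via Proposition~\ref{pro:ouD}) is a sensible precaution that the paper omits.
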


  We have a similar result as Corollary \ref{cor:fibs} on indegree of each vertex of the Fibonacci-like cubes from the fence-like posets.
\begin{corollary} For $n \geq 3$,
	$$\sum_{k=0}^n d_{n,k}^- = \sum_{k=0}^n \left( \binom{n-k-2}{k-1}+\binom{n-k}{k} \right) = 2F_n.$$
\end{corollary}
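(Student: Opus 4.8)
The first equality is immediate: it is exactly the statement of Proposition~\ref{pro:ou-d} summed over $k$, so the only content is the evaluation of $\sum_{k=0}^{n}\bigl(\binom{n-k-2}{k-1}+\binom{n-k}{k}\bigr)$. The plan is to split this sum into its two pieces and recognize each as a Fibonacci number via the identity $\sum_{k\ge 0}\binom{m-k}{k}=F_{m+1}$ recorded in the introduction.

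First I would handle $\sum_{k\ge 0}\binom{n-k}{k}$, which is $F_{n+1}$ directly by that identity. For the other piece, reindex by $k\mapsto k+1$: since the $k=0$ term is $\binom{n-2}{-1}=0$ under the convention that $\binom{n}{k}=0$ outside $0\le k\le n$, we get $\sum_{k\ge 0}\binom{n-k-2}{k-1}=\sum_{k\ge 0}\binom{(n-3)-k}{k}=F_{n-2}$, again by the same identity applied with $m=n-3$. Then I would close with the Fibonacci recurrence used twice:
\[
F_{n+1}+F_{n-2}=(F_n+F_{n-1})+F_{n-2}=F_n+(F_{n-1}+F_{n-2})=2F_n.
\]
As a sanity/consistency check one can also note that $\sum_{k}d_{n,k}^-=D_n^-(1)$ counts each vertex of $\Phi_n$ once, and by Lemma~\ref{prop:QD-} equals $Q_n(0)=q_{n,0}=2F_n$ (the corollary in Section~\ref{subsec:cube}), so the two derivations agree.

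There is no real obstacle here; the only care needed is bookkeeping at the boundary, namely checking that the index shift in $\sum_{k}\binom{n-k-2}{k-1}$ is valid at the extreme values of $k$ (the $k=0$ term vanishing, and the upper terms vanishing automatically via the convention), and that the identity $\sum_{k\ge 0}\binom{m-k}{k}=F_{m+1}$ is being invoked only for $m\ge 0$, which holds for all $n\ge 3$. Everything else is the one-line Fibonacci computation above.
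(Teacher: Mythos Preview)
Your argument is correct, and it is essentially the derivation the paper leaves implicit: the corollary is stated without proof, immediately after Proposition~\ref{pro:ou-d}, and the intended one-line justification is exactly the binomial--Fibonacci identity $\sum_{k\ge 0}\binom{m-k}{k}=F_{m+1}$ from the introduction applied to each summand, giving $F_{n+1}+F_{n-2}=2F_n$. Your alternative route via $D_n^-(1)=Q_n(0)=q_{n,0}=2F_n$ is also valid and mirrors how the paper obtains the analogous Corollary~\ref{cor:fibs} by specializing a generating function.
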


The conclusion of outdegree is similar to those of indegree.


\begin{thebibliography}{10}

\bibitem{bComte74}
L.~Comtet.
\newblock {\em Advanced Combinatorics: The art of finite and infinite
  expansions}.
\newblock D. Reidel Publishing Company, Dordrecht, enlarged edition, 1974.

\bibitem{aHsu93}
W.~J. Hsu.
\newblock {Fibonacci} cubes-a new interconnection topology.
\newblock {\em IEEE Trans. Parallel Distrib. Syst.}, 4(1):3--12, Jan. 1993.

\bibitem{aKlavzMP11}
S.~Klav\v{z}ar, M.~Mollard, and M.~Petkov\v{s}ek.
\newblock The degree sequence of {Fibonacci} and {Lucas} cubes.
\newblock {\em Discrete Math.}, 311(14):1310--1322, 2011.

\bibitem{aKlavM12}
S.~Klav{\v{z}}ar and M.~Mollard.
\newblock Cube polynomial of {Fibonacci} and {Lucas} cubes.
\newblock {\em Acta Appl Math}, 117(1):93--105, 2012.

\bibitem{aMunarCS2001}
Munarini, E., Cippo, C.~P., and Salvi, N.~Z.
\newblock On the {Lucas} cubes.
\newblock {\em {Fibonacci} Quarterly 39}, 1 (2001), 12--21.

\bibitem{aMolla12}
M.~Mollard.
\newblock Maximal hypercubes in {Fibonacci} and {Lucas} cubes.
\newblock {\em Discrete Appl. Math.}, 160(16):2479--2483, 2012.

\bibitem{aMunarZ02b}
E.~Munarini and N.~Zagaglia~Salvi.
\newblock On the rank polynomial of the lattice of order ideals of fences and
  crowns.
\newblock {\em Discrete Math.}, 259(1):163--177, 2002.

\bibitem{aKlavz2013}
Klav\v{z}ar, S.
\newblock Structure of {Fibonacci} cubes: a survey.
\newblock {\em J Combin Optim 25}, 4 (2013), 505--522.

\bibitem{aSaygE18}
E.~Sayg{\i} and {\"O}.~E\u{g}ecio\u{g}lu.
\newblock $q$-counting hypercubes in {Lucas} cubes.
\newblock {\em Turkish J. Math.}, 42(1):190--203, 2018.

\bibitem{Sloan19}
N.~J.~A. Sloane.
\newblock On-line encyclopedia of integer sequences.
\newblock http://oeis.org/, 2019.

\bibitem{bStanl11}
R.~P. Stanley.
\newblock {\em Enumerative Combinatorics: Volume 1}, volume~49 of {\em
  Cambridge studies in advanced mathematics}.
\newblock Cambridge University Press, Cambridge, 2nd edition, 2011.

\bibitem{aWangZY18}
X.~Wang, X.~Zhao, and H.~Yao.
\newblock Convex expansion for finite distributive lattices with applications.
\newblock {\em arXiv: 1810.06762}, 2018.

\bibitem{aWangZY18c}
X.~Wang, X.~Zhao, and H.~Yao.
\newblock Structure and enumeration results of matchable {Lucas} cubes.
\newblock {\em arXiv: 1810.07329}, 2018.
\end{thebibliography}
\end{document}